\newtheorem{theorem}{Theorem}
\newtheorem{proposition}[theorem]{Proposition}
\theoremstyle{definition}
\numberwithin{equation}{section} \numberwithin{theorem}{section}
\theoremstyle{remark}
\newtheorem{remark}[theorem]{Remark}
\newtheorem{claim}[theorem]{Claim}
\def\z{\,_{\dot z}\,}
\def\vac{|0\rangle}                            
\newcommand\lbb[1]{\label{#1}}
\def\<{\langle}
\def\>{\rangle}
\def\vac{\mathbf{1}}                            
\def\al{\alpha}                         
\def\z{\,_{\dot z}\,}
\def\noi{\noindent}
\def\v2a{(V,\z,\vac,d)}
\begin{document}

\title[Pell equation:
Generalizations of continued fraction and Chakravala
 algorithms]{Pell equation:
A generalization of continued fraction and Chakravala
 algorithms using the LLL-algorithm\ \
}

\author[Jos\'e I. Liberati]{Jos\'e I. Liberati$^*$}
\thanks {\textit{$^{*}$Ciem - CONICET, Medina Allende y
Haya de la Torre, Ciudad Universitaria, (5000) C\'ordoba -
Argentina. \hfill \break \indent \hskip .2cm e-mail:
joseliberati@gmail.com
\hfill \break \indent
Keywords: Pell equation, Chakravala, continued fraction, LLL-algorithm.
\hfill \break \indent
ORCID Number: 0000-0002-5422-4056}}
\address{{\textit{Ciem - CONICET, Medina Allende y
Haya de la Torre, Ciudad Universitaria, (5000) C\'ordoba -
Argentina. \hfill \break \indent e-mail: joseliberati@gmail.com}}}

\date{ 16 jul, 2024}

\subjclass[2000]{Primary 11D09; Secondary 11A55}

\dedicatory{Dedicated
 to my parents, H\'ector and Sheila, with  heartfelt gratitude}

\maketitle

\begin{abstract}
We introduce a generalization of the continued fraction and Chakravala algorithms for solving the Pell equation, utilizing  the LLL-algorithm for rank 2 lattices.
\end{abstract}


\

\section{Introduction}\lbb{intro}

\vskip .3cm

In 628, Brahmagupta was the first to discover the identity
(\ref{3.2}), which states that if the triples $(a,b,k)$ and $(m,l,s)$
satisfy the equations
\begin{equation*}
    a^2-d b^2=k \qquad \hbox{ and }\qquad m^2-d l^2=s,
\end{equation*}
then
\begin{equation}\label{3.2}
   (a^2-d b^2)(m^2-d\, l^2)= (am+db\,l)^2-d(al+b\,m)^2=ks.
\end{equation}

\vskip .2cm

\noindent This allows the {\it composition} of two solution
triples $(a,b,k)$ and $(m,l,s)$ into a new triple
\begin{equation}\label{3.3}
    (am+db_{_{\,}}l,al+b\,m,ks).
\end{equation}

In the Chakravala method, discovered by Bhaskara II in the 12th century, the core idea is that given a triple $(a, b, k)$ (which satisfies $a^2 - db^{, 2} = k$), we can compose it with the trivial triple $(m, 1, m^2 - d)$ (by setting $l=1$ in (\ref{3.3})) to obtain a new triple $(am + db, a + b,m, k(m^2 - d))$ which can be scaled down by $k$ to yield
\begin{equation}\label{3.4}
\bigg(\frac{am + db}{k}\bigg)^2 - d\bigg(\frac{a + b,m}{k}\bigg)^2 = \frac{m^2 - d}{k}
\end{equation}
\vskip .1cm

\noindent and then we choose $m$ to be a positive integer for which the three quotients in (\ref{3.4}) are integers and minimize the absolute value of $m^2 - d$. This results in a new triple of integers and the process is continued until a stage is reached at which the equation has the desired form
\begin{equation*}\label{pell}
a^2 - db^2 = 1,
\end{equation*}
solving the Pell equation.

The fundamental concept of our algorithm is straightforward: we can replicate the entire argument with the inclusion of "$l$" (instead of setting $l=1$), and in each step, we need to choose the two variables $m$ and $l$ that satisfy specific conditions. One of the principal outcomes of this study consists of two variations of this generalization, each imposing different conditions. These are defined as the \textit{First and Second Algorithms with L} in Section 4.

The other primary results of this research involve the implementations and enhancements of the Second Algorithm with L presented in Section 5. Notably, if we assume knowledge of a close lower bound of the regulator (not necessarily its integer part), we have developed an algorithm employing the LLL-algorithm with rank 2 lattices to identify the fundamental solution of the Pell equation. This is referred to as the \textit{Second Algorithm with LLL}, as detailed in Subsection \ref{lL}. Here, the LLL-algorithm refers to the algorithm as defined in \cite{LLL}. Utilizing analogous concepts, we introduce a generalization of the continued fraction algorithm in Section \ref{7ma}.

We identify the study of convergence and  its computational complexity  as open problems. We believe  that the concepts introduced in this work warrant further exploration and detailed examination.

Sections 2 and 3 provide concise overviews of the simple continued fraction algorithm (Section 2) and the Chakravala method (Section 3) for solving the Pell equation. These sections serve to illustrate the natural extension of the algorithm, as we derived corresponding analogs to various equations presented therein (refer to Section 7). In Section 4, we introduce two novel algorithms that generalize the Chakravala method, accompanied by illustrative examples. Section 5 introduces several variants of these algorithms, incorporating the LLL-algorithm with rank 2 lattices.

The discovery of these algorithms dates back to September 2015. In 2018, we further developed the details and compiled this work, excluding Subsection \ref{5-M}. The initial version of this work was published in August 2023, as referenced in \cite{L}.

\vskip .8cm

\section{Simple continued fractions and Pell equation}

\vskip .3cm

To create a self-contained work, in this section, we introduce the concept of a simple continued fraction and its application in solving the Pell equation. The subsequent definitions and results are standard, as found in \cite{Hua}, \cite{NZ}, \cite{JW}, and \cite{Joy}. Let $\lfloor x\rfloor$ denote the greatest integer less than or equal to the real number $x$.

The \textit{simple continued fraction expansion} of a real number $\phi$ (which we denote as $\phi_0$) is an expression of the form
\begin{equation}\label{1}
\phi = \phi_0 = q_0 + \cfrac{1}{q_1 + \cfrac{1}{q_2 + \cfrac{1}{q_3 + \ddots}}}
\end{equation}

\vskip .2cm

\noindent expressed as $\phi_0 = [q_0; q_1, q_2, q_3, \dots]$, where $q_0 = \lfloor \phi_0 \rfloor$, and the numbers $q_n$ for $n > 0$ are positive integers defined recursively by
\begin{equation}\label{PPHI}
\phi_{n+1} = \frac{1}{\phi_n - q_n}, \quad q_{n+1} = \lfloor \phi_{n+1} \rfloor \quad \text{for all } n \geq 0,
\end{equation}

\vskip .2cm

\noindent where $\phi_n$ is the $n${\it -th complete quotient}. Note that $\phi_0 = [q_0; q_1, \dots, q_n, \phi_{n+1}]$.

We define two sequences of integers recursively as $A_0 = q_0$, $B_0 = 1$, $A_1 = q_1 q_0 + 1$, and $B_1 = q_1$. Additionally, we have
\begin{align}\label{3}
A_{n+1} &= q_{n+1} A_n + A_{n-1}\\
B_{n+1} &= q_{n+1} B_n + B_{n-1} \quad \text{for } n \geq 1.\nonumber
\end{align}

It is straightforward to establish by induction (refer to Theorem 7.5 and Theorem 7.4 of \cite{NZ}) that:
\begin{equation}\label{4}
A_n B_{n-1} - B_n A_{n-1} = (-1)^{n-1},
\end{equation}
and
\begin{equation*}
\frac{A_n}{B_n} = [q_0; q_1, q_2, q_3, \dots q_n] = q_0 + \cfrac{1}{q_1 + \cfrac{1}{\ddots + \cfrac{1}{q_{n-1} + \frac{1}{q_n}}}}
\end{equation*}

\vskip .2cm

\noindent The quotient ${A_n}/{B_n}$  is denoted as the
 $n${\it -th convergent} of the continued fraction (\ref{1}).

We now focus on the simple continued fraction expansion of $\sqrt{d}$ for a positive integer $d$ that is not a perfect square. The simple continued fraction expansion for $\sqrt{d}$ provides all the necessary tools to solve Pell's equation $x^2 - dy^2 = 1$, as demonstrated by Euler and Lagrange. The following result is derived from Theorem 8.1 in \cite{Hua} or page 346 in \cite{NZ}.

\begin{proposition}\label{prop 2}
Let $d$ be a positive integer and not a perfect square. In the
continued fraction of $\phi_0=\sqrt{d}$, we have
\begin{equation}\label{7000}
    \phi_n=\frac{P_n+\sqrt{d}}{Q_n}, \qquad
    \hbox{ for }n\geq 0
\end{equation}

\vskip .2cm

\noi where $P_n$ and $Q_n$ are the  integers defined
recursively by $P_0=0$, $Q_0=1$ and
\begin{equation}\label{8}
    P_{n+1}=q_n Q_n-P_n, \qquad\hbox{ and } \qquad
    Q_{n+1}=\frac{d-P_{n+1}^2}{Q_n}\quad \hbox{ for } n\geq 0,
\end{equation}
with $q_0=\lfloor \sqrt{d}\rfloor$ and
    \begin{equation}\label{vvver}
q_n=\Bigg\lfloor\frac{P_n+\lfloor\sqrt{d}\rfloor}{Q_n}\Bigg\rfloor.
\end{equation}
 We also have that
\begin{equation}\label{99}
    0<Q_n<2\sqrt{d},\qquad 0<P_n<\sqrt{d}.
\end{equation}
\end{proposition}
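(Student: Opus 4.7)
The plan is to prove all assertions by induction on $n$, with the base case $n=0$ being immediate from $P_0=0$, $Q_0=1$, and $\phi_0=\sqrt{d}$. The essential extra ingredient to carry along in the inductive hypothesis is the divisibility condition $Q_n \mid (d-P_n^2)$, which is trivially true for $n=0$ (since $Q_0=1$) and is what guarantees that the next $Q_{n+1}$ defined in (\ref{8}) is an integer; once the recursion is established, the identity $Q_n Q_{n+1}=d-P_{n+1}^2$ makes this divisibility self-perpetuating.

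For the inductive step, I would start from the definition $\phi_{n+1}=1/(\phi_n-q_n)$ in (\ref{PPHI}), substitute $\phi_n=(P_n+\sqrt{d})/Q_n$, simplify the numerator to $\sqrt{d}-(q_n Q_n-P_n)=\sqrt{d}-P_{n+1}$, and rationalize to obtain
\begin{equation*}
\phi_{n+1}=\frac{Q_n(\sqrt{d}+P_{n+1})}{d-P_{n+1}^2}=\frac{P_{n+1}+\sqrt{d}}{Q_{n+1}},
\end{equation*}
with $Q_{n+1}=(d-P_{n+1}^2)/Q_n$. Integrality of $Q_{n+1}$ follows from the congruence $P_{n+1}\equiv -P_n \pmod{Q_n}$, which gives $d-P_{n+1}^2\equiv d-P_n^2\equiv 0 \pmod{Q_n}$. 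The formula (\ref{vvver}) for $q_n$ is then a direct consequence of the general floor identity $\lfloor(m+x)/N\rfloor=\lfloor(m+\lfloor x\rfloor)/N\rfloor$, valid for any integer $m$, positive integer $N$, and real $x$, applied to $q_n=\lfloor\phi_n\rfloor$ with $m=P_n$, $N=Q_n$, and $x=\sqrt{d}$.

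For the bounds (\ref{99}), I would prove the auxiliary claim that $\phi_n$ is a \emph{reduced} quadratic irrational for $n\geq 1$, meaning $\phi_n>1$ and $-1<\ov{\phi_n}<0$, where $\ov{\phi_n}=(P_n-\sqrt{d})/Q_n$ is the algebraic conjugate. This is itself proved by induction: $\phi_1=1/(\sqrt{d}-q_0)$ is checked directly to be reduced (using $0<\sqrt{d}-q_0<1$ and $\sqrt{d}+q_0>1$), and one verifies that the map $\phi\mapsto 1/(\phi-\lfloor\phi\rfloor)$ sends reduced irrationals to reduced irrationals. From reducedness, the sign relation $\phi_n-\ov{\phi_n}=2\sqrt{d}/Q_n>0$ forces $Q_n>0$; then $\phi_n+\ov{\phi_n}=2P_n/Q_n>0$ forces $P_n>0$; finally $\ov{\phi_n}>-1$ and $\phi_n>1$ translate respectively into $P_n<\sqrt{d}$ and $Q_n<P_n+\sqrt{d}<2\sqrt{d}$. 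The main obstacle, as I see it, is keeping the three strands of the induction (the closed form for $\phi_n$, the divisibility $Q_n\mid d-P_n^2$, and reducedness) running in parallel and in the right order; the underlying algebraic manipulations are otherwise routine.
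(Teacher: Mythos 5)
Your proof is correct. The paper itself gives no proof of this proposition --- it is quoted from \cite{Hua} (Theorem 8.1) and \cite{NZ} (p.~346) --- and your argument (induction carrying the divisibility $Q_n \mid (d-P_n^2)$ via $P_{n+1}\equiv -P_n \pmod{Q_n}$, together with the classical fact that the complete quotients $\phi_n$, $n\geq 1$, are reduced quadratic irrationals) is essentially the standard argument found in those references. One cosmetic slip: the bound $P_n<\sqrt{d}$ follows from $\overline{\phi_n}<0$ once $Q_n>0$ is known, not from $\overline{\phi_n}>-1$; the latter inequality is needed only to propagate reducedness through the induction.
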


\vskip .1cm

In the subsequent proposition, we present some valuable identities as they offer a clear connection between the simple continued fraction, the Chakravala method, and the generalizations provided in Section 4 (refer to equations (\ref{3.9}) and (\ref{4.9}), as well as Section \ref{7ma}).

\

\begin{proposition} (p.46 in \cite{JW})\label{2.2.2}
Let $d$ be a positive integer and not a perfect square. In the
continued fraction of $\phi_0=\sqrt{d}$,  the following relationships hold:
\begin{equation}\label{17}
    A_n^2-dB_n^2=(-1)^{n+1}Q_{n+1}
\end{equation}
and
\begin{align}
    A_n= \, \frac{P_{n+1}A_{n-1}+dB_{n-1}}{Q_n},   \qquad\quad
    B_n= \, \frac{A_{n-1}+P_{n+1}B_{n-1}}{Q_n},\label{18}
\end{align}
along with
\begin{align}\label{19}
    dB_n &= P_{n+1}A_n+Q_{n+1}A_{n-1}\\
    A_n &=P_{n+1}B_n+Q_{n+1}B_{n-1}.\nonumber
\end{align}
\end{proposition}

\

The continued fraction algorithm stops when $A_n^2 - dB_n^2 = 1$, i.e., when $(-1)^{n+1} Q_{n+1} = 1$ (see (\ref{17})).
The \textit{fundamental solution} of $x^2 - y^2 d = 1$ is the solution $(X, Y)$ in the smallest positive integers, denoted as $\epsilon = X + Y\sqrt{d}$.
The number $R_{\, d}^{^{_{10}}} = \log_{10}(X + Y\sqrt{d})$ is referred to as the \textit{regulator (with base 10)}. We have used logarithm with base 10, even if it is not usual in number theory. We only use this notation in Section \ref{lL}.

\begin{theorem} (Theorem 7.26  of \cite{NZ})
Let $d$ be a positive integer, not a perfect square. If $\epsilon=X+Y\sqrt{d}$ is
the fundamental solution  of $x^2-d y^2=1$, then all positive
solutions are given by $x_n,y_n$, for $n=1,2,\dots$, where $x_n$ and
$y_n$ are the integers defined by
\begin{equation*}
x_n+y_n \sqrt{d}=(X+Y \sqrt{d})^n.
\end{equation*}
\end{theorem}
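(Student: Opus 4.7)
The plan is to prove the theorem in two parts: first verify that the integers $(x_n, y_n)$ defined by $x_n + y_n\sqrt{d} = \epsilon^n$ are indeed positive solutions, and then establish the converse, namely that every positive solution arises in this way. The whole argument runs on Brahmagupta's identity (\ref{3.2}) together with a standard sandwiching trick using the powers of $\epsilon$.

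For the first part, I would note that $\epsilon \bar\epsilon = (X+Y\sqrt d)(X-Y\sqrt d) = X^2 - dY^2 = 1$. Expanding $\epsilon^n = x_n + y_n\sqrt d$ using the binomial theorem and the integrality of $X, Y$ shows that $x_n, y_n \in \ZZ$, and in fact the recursion $x_{n+1} = X x_n + dY y_n$, $y_{n+1} = X y_n + Y x_n$ (which is a direct instance of the composition rule (\ref{3.3})) shows by induction that $x_n, y_n$ are positive. Taking norms gives $x_n^2 - dy_n^2 = (\epsilon\bar\epsilon)^n = 1$, so each $(x_n, y_n)$ is a positive solution.

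For the converse, suppose $(a, b)$ is any positive solution. Since $\epsilon > 1$, the sequence $\{\epsilon^n\}_{n\ge 0}$ is strictly increasing and unbounded, so there is a unique $n \geq 0$ with
\begin{equation*}
\epsilon^n \,\le\, a + b\sqrt d \,<\, \epsilon^{n+1}.
\end{equation*}
Dividing by $\epsilon^n$ and using $\epsilon^{-1} = X - Y\sqrt d > 0$, write $(a+b\sqrt d)\epsilon^{-n} = c + e\sqrt d$ with $c, e \in \ZZ$ (this is integral because $\epsilon^{-n} = (X-Y\sqrt d)^n$ has integer coefficients). By Brahmagupta's identity (\ref{3.2}), $c^2 - d e^2 = (a^2 - db^2)(X^2 - dY^2)^n = 1$, and by construction $1 \le c + e\sqrt d < \epsilon$.

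The key step, and the place to be careful, is showing $c \ge 0$ and $e \ge 0$: from $c^2 - de^2 = 1$ we get $c - e\sqrt d = 1/(c + e\sqrt d) \in (1/\epsilon, 1]$, so adding the two inequalities yields $2c > 1 + 1/\epsilon > 0$ and subtracting gives $2e\sqrt d \ge 0$. Thus $(c,e)$ is a non-negative integer solution with $c + e\sqrt d < \epsilon$. By minimality of the fundamental solution $\epsilon$, the only possibility is $e = 0$, which forces $c = 1$; hence $a + b\sqrt d = \epsilon^n$, proving that $(a,b) = (x_n, y_n)$.

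The main obstacle is really just this last positivity check: once one knows that multiplying an arbitrary positive solution by $\epsilon^{-n}$ lands in the range $[1, \epsilon)$ and gives another non-negative integer solution, the minimality of $\epsilon$ immediately closes the argument. Everything else — integrality, the norm computation, and the inductive positivity of $x_n, y_n$ — is a direct application of (\ref{3.2}) and (\ref{3.3}) already recorded in the excerpt.
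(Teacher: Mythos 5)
Your proof is correct and is essentially the classical argument for this result: the paper itself gives no proof, simply citing Theorem 7.26 of \cite{NZ}, and your sandwiching of a positive solution between consecutive powers of $\epsilon$, followed by the positivity check on $(c,e)$ and the appeal to minimality, is exactly the standard proof found there. The only cosmetic point is that for a genuinely positive solution the exponent $n$ in $\epsilon^n \le a+b\sqrt d < \epsilon^{n+1}$ is automatically at least $1$, since $n=0$ would force $e=b=0$.
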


\

Observe that  the equations in (\ref{18}) can be rewritten
as
\begin{equation}\label{pro cont}
A_n+\sqrt{d} \,B_n=(A_0+\sqrt{d} B_0) \  \prod_{j=1}^n\ \bigg(
\frac{P_{j+1}+\sqrt{d}\,}{Q_j} \bigg).
\end{equation}
where $P_i$ and $Q_i$ are defined in Proposition \ref{prop 2}. This is similar to what is referred to as the \textit{"power product"} in \cite{Len}.
Therefore, we have a  product representation of the convergent
and the fundamental solution of the Pell equation.

\

\section{Chakravala or cyclic method}\lbb{3ra}

\

The Indian mathematician Bhaskara II described the first method to solve the Pell equation, known as the \textit{Chakravala (or cyclic) method}, specifically addressing the case $x^2 - 61y^2 = 1$ (among other examples). Here, we present a variant of the algorithm, considering the version provided in \cite{Bar}, \cite{Ed}, and \cite{wiki}. (Note: There are several variants depending on the signs; refer to Remark \ref{signo} for additional details.)

As mentioned in the introduction, in the Chakravala method, the primary concept involves taking a triple $(a, b, k)$ satisfying $a^2 - db^{\, 2} = k$ and composing it with the trivial triple $(m, 1, m^2 - d)$ to obtain a new triple $(am + db, a + bm, k(m^2 - d))$. This can be scaled down by $k$ to yield
\begin{equation}\label{3.4b}
\left(\frac{am + db}{k}\right)^2 - d\left(\frac{a + bm}{k}\right)^2 = \frac{m^2 - d}{k}.
\end{equation}
We then choose $m$ as a positive integer such that $a + b\,m$ is divisible by $k$ and minimizes the absolute value of $m^2 - d$, and hence that of $(m^2 - d)/k$. Assuming $(a, b) = 1$ (and therefore $(k, b) = 1$), we observe that $m^2 - d$ and $am + db$ are also multiples of $k$ using the following equations:
\begin{equation}\label{3.33}
(m^2 - d)b^2 = k - (a^2 - m^2b^2) = k - (a + mb)(a - mb)
\end{equation}
and
\begin{equation}\label{3.34}
am + db = (a + mb)m - (m^2 - d)b.
\end{equation}
This leads to a new triple of \textit{integers}
\begin{equation*}
\tilde{a} = \frac{am + db}{|k|}, \quad \tilde{b} = \frac{a + bm}{|k|}, \quad
\tilde{k} = \frac{m^2 - d}{k},
\end{equation*}
satisfying $\tilde{a}^2 - d\tilde{b}^2 = \tilde{k}$. The process continues until a stage is reached at which the equation has the desired form $a^2 - db^2 = 1$, i.e., $\tilde{k} = 1$.

More precisely, given a  non-square positive integer $d$, the
algorithm produces sequences of integers $A_i, B_i, Q_i$, and $ P_i$
according to the following recipe: we start with $A_0=1,B_0=0,Q_1=1,P_1=0$.
Given integers $A_{n-1},B_{n-1},Q_n$ and $P_n$ where
$(A_{n-1},B_{n-1})=1$ such that
\begin{equation*}
    A_{n-1}^2-d B_{n-1}^2=Q_n,
\end{equation*}
we choose $P_{n+1}$ to be a positive integer for which
$A_{n-1}+B_{n-1}P_{n+1}$ is divisible by $Q_n$ and minimizes the
absolute value of $P_{n+1}^2-d$. Then we take (cf. (\ref{8}))
\begin{equation}\label{3.8}
Q_{n+1}=\frac{P_{n+1}^2-d}{Q_n}
\end{equation}
and (cf. (\ref{18}))
\begin{equation}\label{3.9}
    A_n=\frac{A_{n-1}P_{n+1}+d B_{n-1}}{|Q_n|},\qquad B_n=
    \frac{A_{n-1}+B_{n-1}P_{n+1}}{|Q_n|}.
\end{equation}
Using (\ref{3.33}), (\ref{3.34}), and (\ref{3.8}), we obtain that
$A_n$ and $Q_{n+1}$ are integers. By using (\ref{3.4b}) we get
(cf.(\ref{17}))
\begin{equation}\label{3.10}
    A_n^2-d B_n^2=Q_{n+1},
\end{equation}

\noindent and $(A_n,B_n)=1$ is obtained by observing that $|A_nB_{n-1}-B_n A_{n-1}|=1$.
The method terminates when $Q_{n+1}=1$ for some $n$, and it is
possible to show that in this case, $A_{n}+B_{n} \sqrt d$ is the
fundamental solution of the Pell equation. We shall not prove it
here, see p.35 in \cite{Ed}.

\vskip .2cm

\begin{remark}\label{signo}
On page 33 of \cite{JW}, and in \cite{Bau}, another version of the Chakravala method is employed, where all the elements $Q_n$ are always positive integers. Specifically, given integers $A_{n-1}$, $B_{n-1}$, $Q_n$, and $P_n$ such that $(A_{n-1},B_{n-1})=1$ and satisfying
\begin{equation*}
    |A_{n-1}^2-d B_{n-1}^2|=Q_n,
\end{equation*}
they select $P_{n+1}$ to be a positive integer such that $A_{n-1}+B_{n-1}P_{n+1}$ is divisible by $Q_n$ and minimizes the absolute value of $P_{n+1}^2-d$. Then, they define
\begin{equation*}
Q_{n+1}=\frac{|P_{n+1}^2-d|}{Q_n}
\end{equation*}
and
\begin{equation*}
    A_n=\frac{A_{n-1}P_{n+1}+d B_{n-1}}{Q_n},\qquad B_n=
    \frac{A_{n-1}+B_{n-1}P_{n+1}}{Q_n}.
\end{equation*}

\vskip .2cm

\noindent As mentioned earlier, this procedure ensures that $A_n$ and $Q_{n+1}$ are integers, and
\begin{equation*}
   | A_n^2-d B_n^2|=Q_{n+1}.
\end{equation*}

\noindent
The method terminates when, for some $n$, $Q_{n+1}=1,2,4$ (i.e. $A_n^2-d B_n^2=\pm 1,\pm 2,\pm 4$). At this point, Brahmagupta's composition method is used to construct the fundamental solution. However, we will not delve into the details of this method here (see \cite{wiki} for further information).
\end{remark}

\begin{remark} \label{inicio}
{\bf The first step in the Chakravala algorithm:} We choose $P_2$
as a positive integer to minimize $|P_2^2-d|$. Consequently, $P_{ 2}$ can take the value of
 $\lfloor\sqrt{d}\rfloor$ or $\lfloor\sqrt{d}\rfloor+1$. Therefore

\vskip .2cm

$\ast$ if
$
(\lfloor\sqrt{d}\rfloor+1)^2-d<d-\lfloor\sqrt{d}\rfloor^2
$
, then $P_2=\lfloor\sqrt{d}\rfloor+1, A_1=\lfloor\sqrt{d}\rfloor+1,
B_1=1,Q_2=A_1^2-d\, B_1^2$,

$\ast$ else: $P_2=\lfloor\sqrt{d}\rfloor,
A_1=\lfloor\sqrt{d}\rfloor, B_1=1,Q_2=A_1^2-d\, B_1^2$.
\end{remark}

\vskip .3cm

\noindent From this point onward in the examples, \textbf{we will start from this initial condition}.

\vskip .3cm

By induction and using the ideas of the proof of Proposition 1 in \cite{Bau}, it is possible to demonstrate the following result, cf. (\ref{99}).

\vskip .3cm

\begin{proposition}
The sequence of integers $Q_n$ satisfies
\begin{equation}\label{3.11}
|Q_n|<\sqrt{d} \quad \hbox{ for all }\  n\geq 1.
\end{equation}
\end{proposition}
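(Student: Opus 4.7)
The plan is to prove the estimate (\ref{3.11}) by induction on $n$, with the essential content concentrated in the inductive step, where the minimality condition defining $P_{n+1}$ must be genuinely exploited. For the base case, $Q_1=1<\sqrt{d}$ is immediate; the starting rule (\ref{inicio}) chooses $P_2\in\{\lfloor\sqrt{d}\rfloor,\lfloor\sqrt{d}\rfloor+1\}$ so as to minimise $|P_2^2-d|$, and writing $t=d-\lfloor\sqrt{d}\rfloor^2\in[1,2\lfloor\sqrt{d}\rfloor]$, the two candidate values $t$ and $2\lfloor\sqrt{d}\rfloor+1-t$ have minimum at most $\lfloor\sqrt{d}\rfloor$, so $|Q_2|\le\lfloor\sqrt{d}\rfloor<\sqrt{d}$.

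For the inductive step, assume $|Q_n|<\sqrt{d}$. From $(A_{n-1},B_{n-1})=1$ together with $A_{n-1}^2-dB_{n-1}^2=Q_n$ one checks that $\gcd(B_{n-1},Q_n)=1$, so the divisibility condition $Q_n\mid A_{n-1}+B_{n-1}P$ pins down $P$ to a single residue class modulo $|Q_n|$. Since $|P^2-d|$ is strictly monotone on each side of $\sqrt{d}$, the minimiser $P_{n+1}$ in this class of positive integers must be one of the two consecutive representatives $P_-<\sqrt{d}<P_+$, with $P_+-P_-=|Q_n|$ and both positive by the induction hypothesis. Setting $\delta_-=\sqrt{d}-P_-$ and $\delta_+=P_+-\sqrt{d}$ one has $\delta_-+\delta_+=|Q_n|$ and
\[
|P_-^2-d|=\delta_-(2\sqrt{d}-\delta_-),\qquad |P_+^2-d|=\delta_+(2\sqrt{d}+\delta_+).
\]

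The heart of the proof is to establish $|P_{n+1}^2-d|<|Q_n|\sqrt{d}$, which by (\ref{3.8}) is precisely $|Q_{n+1}|<\sqrt{d}$. I would split into the cases $P_{n+1}=P_-$ and $P_{n+1}=P_+$. In the first case, the optimality $|P_-^2-d|\le|P_+^2-d|$ simplifies to $2\sqrt{d}(\delta_--\delta_+)\le\delta_-^2+\delta_+^2$; when $\delta_-\le\delta_+$ the inequality $\delta_-\le|Q_n|/2$ gives the bound immediately, while for $\delta_->\delta_+$ the estimate $(\delta_-^2+\delta_+^2)/2<\delta_-^2$ combined with the above yields $\sqrt{d}(\delta_--\delta_+)<\delta_-^2$, which rearranges to exactly $\delta_-(2\sqrt{d}-\delta_-)<|Q_n|\sqrt{d}$. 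The case $P_{n+1}=P_+$ is symmetric: the reversed optimality gives $2\sqrt{d}(\delta_--\delta_+)>\delta_-^2+\delta_+^2\ge 2\delta_+^2$, whence $\sqrt{d}(\delta_--\delta_+)>\delta_+^2$, which rearranges to $\delta_+(2\sqrt{d}+\delta_+)<|Q_n|\sqrt{d}$.

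The main obstacle is sharpness. Any argument that fixes a single side of $\sqrt{d}$ and bounds $|P_{n+1}^2-d|$ using only $P_-$ or only $P_+$ yields at best $|Q_{n+1}|<2\sqrt{d}$, which is the continued-fraction bound (\ref{99}). Extracting the sharper factor $\sqrt{d}$ genuinely requires the interplay between both candidates, which the case split above is designed to capture.
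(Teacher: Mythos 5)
Your proof is correct, and it follows exactly the route the paper indicates but does not write out: an induction in which the bound $|Q_{n+1}|<\sqrt{d}$ is extracted from comparing the two admissible candidates $P_-<\sqrt{d}<P_+$ in the single residue class determined by $Q_n\mid A_{n-1}+B_{n-1}P$ (the paper merely cites Proposition 1 of Bauval for this and gives no argument of its own). Your case analysis is sound and all the inequalities are genuinely strict where needed, since $\delta_-\neq\delta_+$ because $\sqrt{d}$ is irrational.
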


By utilizing (\ref{3.11}) and  (\ref{3.10}), in conjunction with the following proposition, we establish that the quotients
$A_n/B_n$ generated by the Chakravala method are indeed convergent in the simple continued fraction expansion of $\sqrt{d}$.

\vskip .3cm

\begin{proposition}\label{pp}
{\rm (Theorem 7.24 in \cite{NZ})} If $A$ and $ B$ are positive integers
with $(A,B)=1$, $d$ is a  non-square positive integer and $Q\in
\mathbb{Z}$ such that $|Q|< \sqrt{d}$ and
\begin{equation*}
A^2-d B^2=Q,
\end{equation*}
then there is an $i>0$ for which $A=A_i$ and $B=B_i$, where $A_i/B_i$
is the $i$-th convergent in the simple continued fraction
expansion of $\sqrt{d}$.
\end{proposition}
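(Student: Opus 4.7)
The plan is to show that $A/B$ is a sufficiently good rational approximation to $\sqrt{d}$ and then invoke Legendre's classical criterion: if $\gcd(p,q)=1$ and $|\alpha - p/q| < 1/(2q^2)$, then $p/q$ appears as a convergent of the continued fraction expansion of $\alpha$. The key algebraic identity is the factorization
$$(A - B\sqrt{d})(A + B\sqrt{d}) \;=\; A^2 - d B^2 \;=\; Q,$$
which gives
$$\left|\frac{A}{B} - \sqrt{d}\right| \;=\; \frac{|Q|}{B(A + B\sqrt{d})}.$$
I would then split into two cases according to the sign of $Q$.

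For the case $Q > 0$, we have $A^2 > dB^2$, hence $A > B\sqrt{d}$ and so $A + B\sqrt{d} > 2B\sqrt{d}$. Combined with the hypothesis $|Q| < \sqrt{d}$, this yields
$$\left|\frac{A}{B} - \sqrt{d}\right| \;<\; \frac{\sqrt{d}}{B \cdot 2B\sqrt{d}} \;=\; \frac{1}{2B^2},$$
so Legendre's theorem, applied using $\gcd(A,B)=1$, immediately shows that $A/B$ is the $i$-th convergent $A_i/B_i$ for some $i$.

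The main obstacle is the case $Q < 0$: here $A < B\sqrt{d}$, so $A + B\sqrt{d} < 2B\sqrt{d}$ and the naive estimate gives only $|A/B - \sqrt{d}| < 1/B^2$, which is insufficient for a direct application of Legendre's theorem. I would handle this either by sharpening the lower bound on $A + B\sqrt{d}$ using $A = \sqrt{dB^2 - |Q|}$ (so that, once $|Q|$ is small enough relative to $dB^2$, the factor $A+B\sqrt{d}$ is still close to $2B\sqrt{d}$), or, more robustly, by invoking the best-approximation property of convergents: assume for contradiction that $A/B$ is not a convergent; then there exists a convergent $A_n/B_n$ with $1 \le B_n \le B$ satisfying $|A_n - B_n\sqrt{d}| \le |A - B\sqrt{d}|$, and the identity (\ref{17}) together with the bound $|Q|<\sqrt{d}$ forces $A_n^2 - dB_n^2 = Q$, contradicting the uniqueness of the coprime representation $(A,B)$.

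Finally, since $A,B > 0$, the index $i$ produced by Legendre's theorem is strictly positive, completing the proof. The delicate point throughout is really the bookkeeping in the $Q<0$ case; everything else is a direct manipulation of the factorization and the standard theory of continued fractions developed in Proposition~\ref{prop 2} and the surrounding discussion.
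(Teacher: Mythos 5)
The paper gives no proof of this proposition at all --- it is quoted directly from Niven--Zuckerman--Montgomery (Theorem 7.24) --- so the only question is whether your argument actually closes the case you yourself flag as delicate, and it does not. Your $Q>0$ case is correct. For $Q<0$ you rightly observe that the naive estimate only yields $|A/B-\sqrt{d}|<1/B^2$, but neither of your two proposed repairs is carried out, and neither works as sketched. The first (sharpening the lower bound on $A+B\sqrt{d}$ via $A=\sqrt{dB^2-|Q|}$) reduces, after squaring, to the requirement $|Q|<\sqrt{d}-1/(4B^2)$, which is not a consequence of the hypothesis $|Q|<\sqrt{d}$ when the fractional part of $\sqrt{d}$ is small; Legendre's criterion applied directly to $A/B$ can genuinely fail to be verifiable this way. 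The second repair claims the best-approximation property ``forces $A_n^2-dB_n^2=Q$'' and then appeals to ``the uniqueness of the coprime representation'': the forcing step is unjustified (the inequality $|B_n\sqrt{d}-A_n|\le|B\sqrt{d}-A|$ only compares the two quantities, it does not make them equal), and the uniqueness claim is false --- distinct coprime pairs can give the same $Q$, e.g.\ for $d=61$ both $(453,58)$ and $(9747957,1248098)$ satisfy $A^2-61B^2=5$, as visible in Table \ref{table:1}.

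The standard way to close the $Q<0$ case --- and the one used in the cited source --- is to pass to the reciprocal. Write
\[
\left|\frac{B}{A}-\frac{1}{\sqrt{d}}\right|
=\frac{|B\sqrt{d}-A|}{A\sqrt{d}}
=\frac{|Q|}{A\sqrt{d}\,\bigl(A+B\sqrt{d}\bigr)} .
\]
Since $Q<0$ gives $B\sqrt{d}>A$, we have $A+B\sqrt{d}>2A$, so the right-hand side is less than $|Q|/(2A^{2}\sqrt{d})<1/(2A^{2})$. Legendre's criterion (with $\gcd(B,A)=1$) then shows that $B/A$ is a convergent of $1/\sqrt{d}$; and since $1/\sqrt{d}=[0;q_0,q_1,\dots]$ when $\sqrt{d}=[q_0;q_1,\dots]$, the convergents of $1/\sqrt{d}$ are precisely the reciprocals of those of $\sqrt{d}$, whence $A/B=A_i/B_i$ for some $i>0$. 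With this substitution for your $Q<0$ case the proof is complete; as written, the crucial case is still open.
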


\vskip .2cm

The exercises on page 35 of \cite{Ed} show that the cyclic method efficiently finds the Pell equation's fundamental solution, often skipping non-solution steps inherent in the continued fraction method. Computational evidence suggests the Chakravala method needs roughly 69\% of the steps required by the continued fraction approach.

\vskip .2cm

Now, we present an implementation of the algorithm following p.34 in \cite{JW}. The goal is to transform the conditions on $P_{n+1}$, namely $A_{n-1}+B_{n-1}P_{n+1}$ being divisible by $Q_n$ and $|P_{n+1}^2-d|$ being minimal, into simpler ones, avoiding the use of the large numbers $A_{n-1}$ and $B_{n-1}$.

Using (\ref{3.9}) and (\ref{3.8}), the following expression is derived:
\begin{align}\label{909}
P_n B_{n-1} - A_{n-1} &= \frac{P_n(A_{n-2} + B_{n-2}P_n) - (A_{n-2}P_n + d B_{n-2})}{|Q_{n-1}|} = B_{n-2} \frac{(P_n^2 - d)}{|Q_{n-1}|}   \\
&= B_{n-2} Q_n \text{ sign}(Q_{n-1}).\nonumber
\end{align}

\vskip .2cm

\noindent
Hence, we have that $P_n B_{n-1} - A_{n-1} \equiv 0 \ \ ({\rm mod }\ |Q_n|)$. Also, by construction, $Q_n | (P_{n+1} B_{n-1} + A_{n-1})$. Therefore, $(P_{n+1} + P_n) B_{n-1} \equiv 0 \ \ ({\rm mod }\ |Q_n|)$. Since $(Q_n, B_{n-1}) | (A_{n-1}, B_{n-1})$, it is obtained that $(Q_n, B_{n-1}) = 1$, and
\begin{equation}\label{3.12}
P_{n+1} \equiv -P_n \ \ ({\rm mod}\ |Q_n|).
\end{equation}

\noindent
Hence, using the fact that $P_j$'s are positive integers, the following relation holds:
\begin{equation*}\label{KEY}
P_{n+1} = -P_n + q_n |Q_n|
\end{equation*}

\noindent
for some positive integer $q_n$  (cf. (\ref{8})). It should be noted that $P_{n+1}$ must satisfy $|P_{n+1}^2-d| \leq |P^2-d|$ for any positive $P$ congruent to $-P_{n}$ modulo $|Q_n|$. By Theorem 2.2 on page 34 in \cite{JW}, at each step, the following expression is chosen (cf. (\ref{vvver})):
\begin{equation*}\label{bb}
q = \bigg\lfloor \frac{P_n + \sqrt{d}}{|Q_n|} \bigg\rfloor,
\end{equation*}
and the possible values of $q_n$ are $q$ or $q+1$, obtaining $P_{n+1}$. Now, $A_n, B_n$, and $Q_{n+1}$ are defined as in (\ref{3.8}) and (\ref{3.9}). If $Q_{n+1} = 1$, then the algorithm is stopped, obtaining the fundamental solution.

\vskip .3cm

Observe that in the continued fraction algorithm, $q_n$ in (\ref{vvver}) corresponds to selecting $P_{n+1}$ as the integer congruent to $-P_n$ modulo $Q_n$ (where all $Q_n$ are positive) such that $\sqrt{d} - P_{n+1}$ is \textit{positive} and minimum (refer to (\ref{8}) and Section \ref{7ma}), or equivalently, such that ${d} - P_{n+1}^2$ is \textit{positive} and minimum. Similar to the continued fraction algorithm (see Proposition \ref{prop 2}), we define the numbers $P_{n+1}$ and $Q_{n+1}$ without involving $A_{n-1}$ and $B_{n-1}$. Finally, by using this implementation of the Chakravala method, we can express the fundamental solution as a power product, akin to the situation observed in (\ref{pro cont}).

\

\vskip .5cm

\section{Two algorithms that generalize the Chakravala method, that incorporate an additional variable}\lbb{4ta}

\

In this section, we present one of the main results of this work: two algorithms that generalize the Chakravala method. In Subsection \ref{firstL} we define the \textit{First Algorithm with $L$}, and in  Subsection \ref{secondL} we define the \textit{Second Algorithm with $L$}.

\vskip .2cm

The basic idea for both algorithms is very simple: in the Chakravala method,  we
composed a  triple $(a,b,k)$  with the trivial triple
$(m,1,m^2-d)$ (that is, we put $l=1$ in (\ref{3.3})) to get a new
triple $(am+db,a+bm,k(m^2-d))$ which can be scaled down by $k$ as  explained in the Introduction and Section 3.

However, we can repeat the entire argument with the "$l$" included. More precisely,  as it was pointed out in (\ref{3.2}) and (\ref{3.3}), given a
triple $(a,b,k)$ (that satisfies $a^2-db^2=k$), we
can compose it with the  triple $(m,l,m^2-d\,l^2)$ (see
(\ref{3.3})) to get a new triple $(am+db\,l,al+bm,k(m^2-dl^2))$
which can be scaled down by $k$ to get
\begin{equation}\label{4.4}
\bigg(\frac{am+db\,l}{k}\bigg)^2-d\bigg(\frac{al+b\,m}{k}\bigg)^2
=\frac{m^2-dl^2}{k}.
\end{equation}
We now  have  to choose two variables, $m$ and $l$. First of all,
we choose $m$ and $l$ to be  positive integers for which $al+b\,m$
is divisible by $k$. Then, assuming that $(a,b)=1$ (and therefore
$(k,b)=1$),
 we  see that $m^2-dl^2$
is also a multiple of $k$. This is established by using the equation:
\begin{equation}\label{4.33}
(m^2-dl^2)b^2=kl^2-(a^2l^2-b^2m^2)=kl^2-(al+bm)(al-bm).
\end{equation}

\vskip .2cm

\noindent
Furthermore, using (\ref{4.4}), we  have that  $am+db\,l$ is also a multiple
of $k$. This results in a new triple of {\it integers}
\begin{equation*}
    \tilde{a}=\frac{am+db\,l}{|k|}, \quad \tilde{b}=\frac{al+b\,m}{|k|}, \quad
    \tilde{k}=\frac{m^2-dl^2}{k},
\end{equation*}
where $\tilde{a}^2-d\tilde{b}^2=\tilde{k}$.

\

Now, we are free to impose an additional condition of minimization, as in the Chakravala method.

\vskip .4cm

\subsection{First Algorithm with L}\lbb{firstL}

In this first case, we  impose that $m$ and $l$ are chosen to minimize the absolute value of $m^2 - dl^2$ and, consequently,  that
of $(m^2-dl^2)/k$, with $1\leq l\leq L$ for some fixed $L$.
The process continues until a stage is reached at
which the equation has the desired form $a^2-db^2=1$.

More precisely, given a  non-square positive integer $d$ and a
positive integer $L$, the algorithm produces sequences of
integers $a_i, b_i, k_i, m_i$, and $ l_i$ using the following recipe. We start with  the first step in the Chakravala algorithm, from Remark \ref{inicio}:
\begin{align}\label{inicio2}
& \mathrm{if \ \ }(\lfloor\sqrt{d}\rfloor+1)^2-d<d-\lfloor\sqrt{d}\rfloor^2
,\mathrm{\ \ then\ \ }  a_1=\lfloor\sqrt{d}\rfloor+1,\,
b_1=1,\,k_2=a_1^2-d\, b_1^2, \\
& \mathrm{else:\ \  } a_1=\lfloor\sqrt{d}\rfloor, \, b_1=1,\,k_2=a_1^2-d\, b_1^2.\nonumber
\end{align}

\vskip .2cm

\noindent
Now, given integers $a_{i-1},b_{i-1}$ and $k_i$ where
$(a_{i-1},b_{i-1})=1$ such that
\begin{equation}\label{new}
    a_{i-1}^2-d \, b_{i-1}^2=k_i,
\end{equation}
we choose $m_{i+1}$ and $l_{i+1}$ as positive integers for
which $a_{i-1}l_{i+1}+b_{i-1}m_{i+1}$ is divisible by $k_i$ and
minimizes the absolute value of $m_{i+1}^2-d\,l_{i+1}^2$ for
$1\leq l_{i+1}\leq L$. Then we take (cf. (\ref{3.8}))
\begin{equation}\label{4.8}
k_{i+1}=\frac{m_{i+1}^2-d\,l_{i+1}^2}{k_i}
\end{equation}
and (cf. equations (\ref{18}) and ({\ref{3.9}))

\begin{equation}\label{4.9}
    a_i=\frac{a_{i-1}m_{i+1}+d \, b_{i-1}l_{i+1}}{|k_i|},\qquad b_i=
    \frac{a_{i-1}l_{i+1}+b_{i-1}m_{i+1}}{|k_i|}.
\end{equation}

\vskip .2cm

\noindent
As before, using (\ref{4.33}), (\ref{4.4}) and $(a_{i-1},b_{i-1})=1$, we obtain that $k_{i+1}$ and $a_i$
are integers. By  (\ref{4.4}) we get (cf.
(\ref{17}) and (\ref{3.10}))
\begin{equation*}
    a_i^2-d \, b_i^2=k_{i+1},
\end{equation*}

\vskip .2cm

\noindent
and  to complete the recursive definition, we need to
prove that $(a_i,b_i)=1$, see  Proposition {\ref{p.1}}. The method
terminates when $k_n=1$ for some $n$. \textit{In the examples, we always obtained the fundamental solution}, except for extremely   large values of $L$ relative to $d$ (cf. the analysis in Table \ref{table:40}).
We refer to this algorithm as the  {\bf First Algorithm with L}.

\vskip .5cm

\begin{proposition}\label{p.1}
The integers $a_i$ and $b_i$ defined by (\ref{4.9}) are coprime.
\end{proposition}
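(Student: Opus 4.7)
The plan is to argue by contradiction using the minimality that is built into the choice of $m_{i+1}$ and $l_{i+1}$. The first step is to establish two algebraic identities analogous to (\ref{4}) in the continued fraction setting. Combining the formulas (\ref{4.9}) with the recursion $a_{i-1}^2 - d\, b_{i-1}^2 = k_i$, a direct calculation yields
\begin{equation*}
a_{i-1}\, a_i - d\, b_{i-1}\, b_i \; = \; \mathrm{sign}(k_i)\, m_{i+1}, \qquad b_{i-1}\, a_i - a_{i-1}\, b_i \; = \; -\mathrm{sign}(k_i)\, l_{i+1}.
\end{equation*}

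Next, suppose for contradiction that some prime $p$ divides both $a_i$ and $b_i$. The two identities above immediately force $p \mid m_{i+1}$ and $p \mid l_{i+1}$. I would then consider the smaller pair $(m', l') := (m_{i+1}/p,\, l_{i+1}/p)$, which consists of positive integers satisfying $1 \leq l' \leq L$ (since $p \mid l_{i+1}$ forces $l_{i+1} \geq p \geq 2$, while $l' < l_{i+1} \leq L$).

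The main check, and the only place where the full hypothesis is used, is that the pair $(m', l')$ still satisfies the divisibility condition $k_i \mid a_{i-1}\, l' + b_{i-1}\, m'$, so that it is a legitimate competitor in the minimization step. Using $|k_i|\, b_i = a_{i-1}\, l_{i+1} + b_{i-1}\, m_{i+1}$ from (\ref{4.9}) together with the assumption $p \mid b_i$, we get
\begin{equation*}
a_{i-1}\, l' + b_{i-1}\, m' \; = \; \frac{a_{i-1}\, l_{i+1} + b_{i-1}\, m_{i+1}}{p} \; = \; |k_i| \cdot \frac{b_i}{p},
\end{equation*}
which is an integer multiple of $k_i$.

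Finally, $|m'^{\,2} - d\, l'^{\,2}| = |m_{i+1}^2 - d\, l_{i+1}^2|/p^2$. Since $d$ is not a perfect square, the quantity $m_{i+1}^2 - d\, l_{i+1}^2$ is nonzero, so dividing by $p^2 \geq 4$ produces a strictly smaller absolute value. This contradicts the choice of $(m_{i+1}, l_{i+1})$ as a minimizer of $|m^2 - d\, l^2|$ among pairs of positive integers with $1 \leq l \leq L$ satisfying the divisibility condition. I do not expect any serious obstacle; the delicate point is solely the preservation of divisibility after dividing by $p$, which is precisely what $p \mid b_i$ delivers.
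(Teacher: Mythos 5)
Your argument is correct and is essentially the paper's own proof: both show that a common divisor of $a_i$ and $b_i$ must divide $l_{i+1}$ and $m_{i+1}$ (you via the identities $b_{i-1}a_i-a_{i-1}b_i=-\mathrm{sign}(k_i)\,l_{i+1}$ and $a_{i-1}a_i-d\,b_{i-1}b_i=\mathrm{sign}(k_i)\,m_{i+1}$, the paper via the first of these together with the formulas (\ref{4.9}) and $(a_{i-1},b_{i-1})=1$), and then both scale the pair down by that factor, check that the divisibility condition survives because it survives for $b_i$, and contradict the minimality of $|m_{i+1}^2-d\,l_{i+1}^2|$. The only cosmetic difference is that you work with a prime divisor $p$ while the paper works with the full gcd $h$.
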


\vskip .1cm

\begin{proof}
Using (\ref{4.9}) and (\ref{new}), we have:
\begin{align*}
  a_i \, b_{i-1} - b_i\,  a_{i-1}
  & = \frac{(a_{i-1}m_{i+1}+d \, b_{i-1}l_{i+1})\, b_{i-1} - (a_{i-1}l_{i+1}+b_{i-1}m_{i+1})\, a_{i-1}}{|k_i|}\\
  & = \frac{(d b_{i-1}^2-a_{i-1}^2)}{|k_i|}\ l_{i+1}
  = - \frac{k_i}{|k_i|}\ l_{i+1}.
\end{align*}
Hence, we obtain that $|a_ib_{i-1}-b_i
a_{i-1}|=l_{i+1}$. If we define $h=(a_i,b_i)$, then $h|\,l_{i+1}$. Using this
 together with (\ref{4.9}), we obtain that $h| (b_{i-1} m_{i+1})$
 and $h| (a_{i-1} m_{i+1})$. But $(a_{i-1},b_{i-1})=1$, hence $h|
 m_{i+1}$.
 Suppose
  $h>1$, and  we will demonstrate that this assumption leads to a contradiction.
 Using all the previous results, we can write $l_{i+1}=h \,\tilde
 l_{i+1}, m_{i+1}=h \,\tilde m_{i+1}$, and $b_{i}=h \,\tilde
 b_{i}$. Since $b_i |k_i|=a_{i-1}l_{i+1}+b_{i-1}m_{i+1}$, then
$\tilde b_i |k_i|=a_{i-1}\, \tilde l_{i+1}+b_{i-1}\,\tilde
m_{i+1}$, and therefore, the positive integers $\tilde m_{i+1}$ and
$\tilde l_{i+1}$ satisfy that $a_{i-1}\,\tilde
l_{i+1}+b_{i-1}\,\tilde m_{i+1}$ is divisible by $k_i$ and:
\begin{equation*}
|m_{i+1}^2-d\,l_{i+1}^2|=h^2\,|\tilde m_{i+1}^2-d\,\tilde
l_{i+1}^{\ 2}|> |\tilde m_{i+1}^2-d\,\tilde l_{i+1}^{\ 2}|
\end{equation*}
which is a contradiction due to the minimization property
satisfied by the pair $m_{i+1}$ and $l_{i+1}$, finishing the
proof.
\end{proof}

Note that this algorithm, with $L=1$ (or $l_i=1$ for all $i>0$)
corresponds to the Chakravala algorithm. Based on examples, we observe  that
(\ref{3.11}) holds,  that is
\begin{equation}\label{720}
    |k_i|< \sqrt{d} \qquad \hbox{ for all } i>0,
\end{equation}
and, using Proposition \ref{pp}, we conclude that in this algorithm,  $a_i / b_i$ are convergent in the simple continued fraction expansion of $\sqrt{d}$.

\vskip .2cm

\subsection{Second Algorithm with L}\lbb{secondL}

Now, we present a second version of this algorithm by changing the additional condition of minimization. In
our second case, we choose to impose that $m$ and $l$ are positive
integers chosen such that they minimize the absolute value of
$m-\sqrt{d}\, l$, with $1\leq l\leq L$ for some fixed $L$.
The process is continued until a stage is reached at
which the equation has the desired form $a^2-db^2=1$, { under
certain conditions on $L$ that depend on $d$} (cf. Table \ref{table:40}).

\ 

More precisely, given a  non-square positive integer $d$ and a
positive integer $L$, the algorithm produces  sequences of
integers $a_i, b_i, k_i, m_i$, and $l_i$ by the following recipe: as
before, we prefer to start with the first step in the Chakravala
algorithm  as in Remark \ref{inicio} and (\ref{inicio2}):
\begin{align}\label{star}
& \mathrm{if \ \ }(\lfloor\sqrt{d}\rfloor+1)^2-d<d-\lfloor\sqrt{d}\rfloor^2
,\mathrm{\ \ then\ \ }  a_1=\lfloor\sqrt{d}\rfloor+1,\,
b_1=1,\,k_2=a_1^2-d\, b_1^2, \\
& \mathrm{else:\ \  } a_1=\lfloor\sqrt{d}\rfloor,\, b_1=1,\,k_2=a_1^2-d\, b_1^2.\nonumber
\end{align}
Now, given integers $a_{i-1},b_{i-1}$ and $k_i$ where
$(a_{i-1},b_{i-1})=1$ such that
\begin{equation*}
    a_{i-1}^2-d b_{i-1}^2=k_i,
\end{equation*}
we choose $m_{i+1}$ and $l_{i+1}$ to be a positive integers for
which $a_{i-1}l_{i+1}+b_{i-1}m_{i+1}$ is divisible by $k_i$ and
minimizes the absolute value of $m_{i+1}-\sqrt{d}\ l_{i+1}$ for
$1\leq l_{i+1}\leq L$. Then we take $k_{i+1},a_i$ and $b_i$ as in
 (\ref{4.8}) and (\ref{4.9}).

As before, using (\ref{4.33}), (\ref{4.4}) and $(a_{i-1},b_{i-1})=1$, we obtain that $k_{i+1}$ and $a_i$
are integers. By using (\ref{4.4}) we get
\begin{equation*}
    a_i^2-d b_i^2=k_{i+1},
\end{equation*}
and in order to complete the recursive definition, we need to
prove that $(a_i,b_i)=1$. However, this follows immediately from the same
arguments given in the proof of  Proposition {\ref{p.1}}. The
method terminates when $k_n=1$ for some $n$.  \textit{In all the examples, we obtain the fundamental solution}, except for extremely large values of $L$, see the analysis in Table \ref{table:40}.
\begin{claim} {\rm (Based on examples)}
In this algorithm, $a_i/b_i$ are also convergent in
the simple continued fraction expansion of $\sqrt{d}$, and
  $  |k_i|< 2\sqrt{d}  \hbox{ for all } i>0$.
\end{claim}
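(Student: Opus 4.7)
The plan is to address the two parts of the claim by induction on $i$, establishing the bound $|k_i|<2\sqrt{d}$ first and then deducing the convergent property from it via \prref{pp} combined with additional structural information specific to the recursion (\ref{4.9}).

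For the inductive step on $|k_i|<2\sqrt{d}$, assume $(a_{i-1},b_{i-1})=1$ and $|k_i|<2\sqrt{d}$. Since $(a_{i-1},b_{i-1})=1$ forces $\gcd(b_{i-1},k_i)=1$, the congruence $b_{i-1}\,m\equiv -a_{i-1}\ ({\rm mod}\ |k_i|)$ is solvable, and since $|k_i|/2<\sqrt{d}$ one can choose a positive integer $m$ with $|m-\sqrt{d}|\leq |k_i|/2$. The pair $(m,1)$ is admissible with $l=1\leq L$, so the minimization step yields $|m_{i+1}-\sqrt{d}\,l_{i+1}|\leq |k_i|/2$. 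Factoring $k_{i+1}k_i=(m_{i+1}-\sqrt{d}\,l_{i+1})(m_{i+1}+\sqrt{d}\,l_{i+1})$ and bounding $|m_{i+1}+\sqrt{d}\,l_{i+1}|\leq 2\sqrt{d}\,l_{i+1}+|k_i|/2$ gives
\begin{equation*}
|k_{i+1}|\ \leq\ \sqrt{d}\,l_{i+1}\ +\ |k_i|/4.
\end{equation*}
The difficulty is now transparent: this estimate is smaller than $2\sqrt{d}$ only when $l_{i+1}=1$, whereas the minimization a priori permits $l_{i+1}$ as large as $L$. What would truly close the induction is a quantitative statement of the form $|m_{i+1}-\sqrt{d}\,l_{i+1}|\,l_{i+1}<c\,|k_i|$ for some $c<2$; since minimization forces $m_{i+1}/l_{i+1}$ to be a very good rational approximation of $\sqrt{d}$ precisely when $l_{i+1}$ is large, the natural strategy is to show inductively that $(m_{i+1},l_{i+1})$ itself comes from a convergent of $\sqrt{d}$ and then invoke the classical $|p/q-\sqrt{d}|<1/q^2$. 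Proving this self-referential structural property is where the main obstacle lies, and is presumably the reason the author labels the statement as based on examples only.

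Once $|k_i|<2\sqrt{d}$ has been established, together with $\gcd(a_i,b_i)=1$ from \prref{p.1}, the convergent property follows immediately from \prref{pp} whenever $|k_i|<\sqrt{d}$. In the intermediate range $\sqrt{d}\leq |k_i|<2\sqrt{d}$ the hypothesis of \prref{pp} fails and the bound alone is insufficient: for example $d=10$, $(A,B)=(4,1)$ satisfies $|A^2-dB^2|=6<2\sqrt{10}$ yet $4/1$ is not a convergent of $\sqrt{10}$. For this regime one must exploit the specific form of (\ref{4.9}), which matches the convergent recursion (\ref{18}) with $m_{i+1}/l_{i+1}$ and $|k_i|$ playing the roles of $P_{n+1}$ and $Q_n$; I would try to identify each step of the algorithm with a (possibly skipped) block in a semiregular continued fraction expansion of $\sqrt{d}$, in the spirit of \cite{JW}, where precisely this was carried out in the $L=1$ chakravala case.
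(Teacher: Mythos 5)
The paper does not actually prove this statement: it is labelled ``(Based on examples)'' and is presented purely as an empirical observation, with no argument offered anywhere in the text. So there is no ``paper proof'' to compare against, and your proposal should be judged on its own terms. On those terms it is not a proof, as you yourself acknowledge, but your partial analysis is sound and correctly locates the two genuine obstructions. The comparison with the $l=1$ candidate does give $|m_{i+1}-\sqrt{d}\,l_{i+1}|\leq |k_i|/2$ (the representative of the residue class within $|k_i|/2$ of $\sqrt{d}$ is positive precisely because $|k_i|<2\sqrt{d}$), and the resulting bound $|k_{i+1}|\leq \sqrt{d}\,l_{i+1}+|k_i|/4$ indeed fails to close the induction as soon as $l_{i+1}\geq 2$. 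Your observation that Proposition~\ref{pp} only covers $|k_i|<\sqrt{d}$, together with the explicit counterexample $d=10$, $(A,B)=(4,1)$, correctly shows that the bound $|k_i|<2\sqrt{d}$ alone cannot yield the convergent property; some additional structural input from the recursion is unavoidable.

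The one point worth flagging is that your write-up reads in places as if the bound $|k_i|<2\sqrt{d}$ ``has been established'' before you pass to the convergent question; it has not, and the missing quantitative statement you identify --- something like $l_{i+1}\,|m_{i+1}-\sqrt{d}\,l_{i+1}|<c\,|k_i|$ with $c<2$, which would follow if $(m_{i+1},l_{i+1})$ were known to arise from a convergent of $\sqrt{d}$ --- is exactly the circular dependency that blocks both halves of the claim simultaneously. Your suggested route via a semiregular continued fraction interpretation in the spirit of \cite{JW} is plausible and is consistent with the formulas the paper records in Section~\ref{first} (the analogues of (\ref{3}) and (\ref{19}) and the recursion for $\phi_i$), but carrying it out is an open problem here, not a detail. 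In short: the claim remains unproven both in the paper and in your proposal, and your contribution is an accurate diagnosis of why, rather than a proof.
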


Observe that this algorithm is not equivalent to
Chakravala when restricted to the case $L=1$.

\subsection{Examples}\lbb{Ex}

Now, we present some illustrative examples. In Table \ref{table:1}, the initial example
corresponds to $d=61$, where we display the sequences (generated by
the four algorithms) of three terms $(a,b,k)$ such that
$a^2-db^2=k$, until reaching the fundamental solution of the Pell equation:

\begin{table}[H]
\caption{Computing the fundamental solution when $d=61$}
\label{table:1} \
\begin{center}
\fontsize{8}{11}\selectfont
\begin{tabular}{ | c | c | c | c |}
\hline
Cont. Frac.& Chakravala & $1^{st}$ algorithm with $L=9$ & $2^{nd}$ algorithm with $L=9$ \\
\hline \hline
(7, 1, -12) &     &   &  \\
(8, 1, 3) &   (8, 1, 3)   & (8, 1, 3)   & (8, 1, 3)  \\
(39, 5, -4) &   (39, 5, -4)  &  (39, 5, -4) &  \\
(125, 16, 9) &     &   &  \\
(164, 21, -5) &  (164, 21, -5)   & (164, 21, -5)  & (164, 21, -5) \\
(453, 58, 5) &  (453, 58, 5)   & &  \\
(1070, 137, -9) &     &   &  \\
(1523, 195, 4) &   (1523, 195, 4)  &  (1523, 195, 4) & (1523, 195, 4) \\
        (5639, 722, -3) &  (5639, 722, -3)   &   &  \\
        (24079, 3083, 12) &     &   &  \\
(29718, 3805, -1) &  (29718, 3805, -1)& (29718, 3805, -1)  &  (29718, 3805, -1)\\
        (440131, 56353, 12)&     &   &  \\
        (469849, 60158, -3)&  (469849, 60158, -3)   & (469849, 60158, -3)  &  \\
(2319527, 296985, 4)&  (2319527, 296985, 4)& (2319527, 296985, 4)& (2319527, 296985, 4) \\
        (7428430, 951113, -9)&     &   &  \\
(9747957, 1248098, 5)&   (9747957, 1248098, 5)  &  (9747957, 1248098, 5) &  \\
        (26924344, 3447309, -5)&  (26924344, 3447309, -5)   &   &  \\
        (63596645, 8142716, 9)&     &   & (63596645, 8142716, 9) \\
(90520989, 11590025, -4)&  (90520989, 11590025, -4)   & (90520989, 11590025, -4)  &  \\
(335159612, 42912791, 3)&  (335159612, 42912791, 3)   &   & (335159612, 42912791, 3) \\
        (1431159437, 183241189, -12)&     &   &  \\
 (1766319049, 226153980, 1)\!  &  (1766319049, 226153980, 1)\!
  &  (1766319049, 226153980, 1)\!  & (1766319049, 226153980, 1)\!  \\
\hline
\end{tabular}
\end{center}
\end{table}

Observe,  that we need 22 steps with the continued fraction, 14 steps
with Chakravala, and only 10 and 8 steps in both algorithms with
$L=9$, respectively. We always obtained convergent of the
continued fraction of $\sqrt{61}$. Note that in Chakravala and
the First Algorithm with $L=9$, we have $|k_i|<\sqrt{61}$, but in the other two algorithms, we have
$|k_i|<2\sqrt{61}$.

In Table \ref{table:2}, we present the number of steps  required  for
each value of $d$ in each algorithm.
\begin{table}[H]
\caption{Number of steps  required  for each value of $d$}
\label{table:2} \
\begin{center}
\fontsize{11}{16}\selectfont
\begin{tabular}{| c || c | c | c | c |}
\hline
  d &   Cont. Frac.& Chakravala &  $1^{st}$ algorithm with $L=9$ & $2^{nd}$ algorithm with $L=9$ \\
\hline \hline
 46 &  12 &   8  & 4 &  4  \\
 61 &  22 &  14  & 10 & 8  \\
 97 &  22 & 12    & 8 & 6   \\
 109 &  30 &  22  & 15 & 11   \\
 313 &  34 &  26  & 14  &  14\\
 541 &  78 &  56  & 32 & 27  \\
\hline
\end{tabular}
\end{center}
\end{table}

Based on examples, when using $L=9$ in the Second Algorithm, the number of steps needed is
$0.34$ times the number of steps using continued fractions. Also,
with $L=100$, it is $0.2$ times the number of steps.

We also observe an asymptotic trend in the number of steps in the following example. Taking $d=132901$, in this case,
we need 422 steps with the continued fractions algorithm. The table below illustrates the steps for different values of $L$ with the Second Algorithm:

\begin{table}[H]
\caption{Number of steps with $L$ when $d=132901$ }
\label{table:40} \
\begin{center}
\fontsize{10}{16}\selectfont
\begin{tabular}{ | c || c | c | c | c | c | c | c | c | c |}
\hline
 L & 9 & 100 & 1000 & 1500 & 1625 & 1687
 & 1698  & 1699 & $\geq$ 1700  \\
\hline \hline  steps & 141 & 83 & 63 & 60 & 60 & 59
& 59 & 59  & diverges   \\
   \hline
\end{tabular}
\end{center}
\end{table}
\noi In this case, "diverges" and "$\geq$ 1700" mean that for
several values of $L$ greater or equal to 1700, the algorithm does
not stop after more than 300 steps.

\

\vskip .5cm

\section{Implementation and improvements of both algorithms}\lbb{5ta}

\

The other main results of this research pertain to the implementations and enhancements of the Second Algorithm with L. Notably, the \textit{Second Algorithm with LLL}, as detailed in Subsection \ref{lL}, stands out as the most significant.
As in the implementation of the Chakravala algorithm presented in Section 3,  the aim of this section is to transform the conditions on $m_{i+1}$ and $l_{i+1}$ into simpler ones, avoiding the use of the very large numbers $a_{i-1}$ and $b_{i-1}$. Observe that in both algorithms, we have to find at each step $m_{i+1}$ and $l_{i+1}$ satisfying that $k_i$ divides $a_{i-1} l_{i+1}+b_{i-1} m_{i+1}$. In Subsection \ref{5-1} and Subsection \ref{5-M}, we present an equivalent condition without involving $a_{i-1}$ and $b_{i-1}$. In Subsection \ref{5-2}, we study the minimization condition in the First Algorithm with L. In Subsection \ref{5-3}, we describe the minimization condition in the Second Algorithm with L, and  we give the implementation and some improvements of the Second Algorithm with L, presenting two variants of it. In Subsection \ref{lL}, we define the Second Algorithm with LLL.

\subsection{An equivalent condition for $m_{i+1}$ and $l_{i+1}$ such that $k_i$ divides $a_{i-1} l_{i+1}+b_{i-1} m_{i+1}$}\lbb{5-1}

The following  results hold for both
algorithms with L. Since $(a_{i-1},b_{i-1})=1$, we have that
$(b_{i-1},k_i)=1$. Therefore, the condition that $k_i$ divides $a_{i-1} l_{i+1}+b_{i-1} m_{i+1}$ is  \textbf{equivalent} to
\begin{equation}\label{4.51}
    m_{i+1}\equiv -\frac{\ a_{i-1}}{\ b_{i-1}}\ l_{i+1} \ \ {\hbox{ (mod
    $|k_i|$)}}.
\end{equation}
Now, we define $M_i$ such that $0\leq M_i<|k_i|$ and
\begin{equation}\label{4.52}
    M_{i}\equiv -\frac{\ a_{i-1}}{\ b_{i-1}}  \ \ {\hbox{ (mod
    $|k_i|$)}}.
\end{equation}
Hence, we have that $k_i$ divides $a_{i-1} l_{i+1}+b_{i-1} m_{i+1}$ \textbf{if and only if}

\begin{equation}\label{4.53}
    m_{i+1}=M_i \,l_{i+1} + r_i \ |k_i|,
\end{equation}

\vskip .2cm

\noindent for some integer $r_i$. In  Subsection \ref{5-M}, we present how to get $M_i$ in terms of $m_j,l_j,k_j$, and  $M_{j-1}$ for $j\leq i$, that is, without involving the large numbers $a_{i-1}$ and $b_{i-1}$, and this can be used in the implementation of any version of the algorithm. Equation (\ref{4.53}) will be used to impose the minimization conditions of both algorithms.

\vskip .3cm

\subsection{Minimization Condition of the First Algorithm with L}\lbb{5-2}

Recall that in the First Algorithm with L we have to minimize  $|m_{i+1}^2-d\,l_{i+1}^2|$ for
$1\leq l_{i+1}\leq L$. Using (\ref{4.53}),
we have
\begin{align}\label{4.54}
    \big|{m_{i+1}^2-d\, l_{i+1}^2}\big| &
    ={|m_{i+1}-\sqrt{d}\, l_{i+1} |}
    {|m_{i+1}+\sqrt{d}\, l_{i+1} |} \\
    &  = |k_i|^2 \bigg|r_i-\bigg(\frac{\sqrt{d}-M_i}{|k_i|}\bigg)\,
    l_{i+1}\bigg|\bigg|r_i+\bigg(\frac{\sqrt{d}+M_i}{|k_i|}\bigg)\,
    l_{i+1}\bigg|.\nonumber
\end{align}
For each $l_{i+1}$ with $0<l_{i+1}\leq L$, we have two options for
$r_i$: $r_i^+$ as one of the integers near to
$\frac{(\sqrt{d}-M_i)}{|k_i|}\,l_{i+1}$ or $r_i^-$ as one of the
integers near to $\frac{(-\sqrt{d}-M_i)}{|k_i|}\,l_{i+1}$. If we
take $r_i^-$, then we have $m_{i+1}$ as an integer near to
$M_i l_{i+1}+|k_i|\frac{(-\sqrt{d}-M_i)}{|k_i|}l_{i+1}=-\sqrt{d}\,
l_{i+1}$ which is negative, contradicting our assumptions.

Therefore, we must take $r_i^+$, which is always positive because,
by (\ref{720}), we have $M_i<|k_i|<\sqrt{d}$. Thus, we clearly need to take
\begin{equation*}
r_i^+={\rm floor}\bigg(\frac{(\sqrt{d}-M_i)}{|k_i|}l_{i+1}\bigg)
\quad\hbox{ or } \quad r_i^+={\rm
ceil}\bigg(\frac{(\sqrt{d}-M_i)}{|k_i|}l_{i+1}\bigg).
\end{equation*}

\noindent Then, $l_{i+1}$ is the integer that produces the minimum
of (\ref{4.54}), for $1\leq l_{i+1}\leq L$.
Unfortunately, we do not know how to determine this minimum in a simple and fast way. \textit{With this, we conclude the analysis of the First Algorithm in this work.}

\vskip .5cm

\subsection{Implementation of the Second Algorithm with L}\lbb{5-3}

This subsection is one of the main parts of this work.  In each step of the Second Algorithm with
L, we need to choose $m_{i+1}$ and $l_{i+1}$ as positive integers for
which $a_{i-1}l_{i+1}+b_{i-1}m_{i+1}$ is divisible by $k_i$ and
  $|\,m_{i+1}-\sqrt{d}\ l_{i+1}|$ is minimal for
$1\leq l_{i+1}\leq L$. We have seen in
 (\ref{4.53}) that the condition $k_i\,|\,(a_{i-1}l_{i+1}+b_{i-1}m_{i+1})$ is equivalent to 
 $
 m_{i+1}=M_i \,l_{i+1} + r_i \ |k_i| $ 
 for some integer $r_i$, where $M_i$ can be computed without involving $a_{i-1}$ and $b_{i-1}$ as in Subsection \ref{5-M}.

Recall that in this algorithm $|k_i|<2\,\sqrt{d}$, but we obtain the following claim

\begin{claim} (Based on  examples) In this case, we also have $M_i<\sqrt{d}$ for all
$i>0$.
\end{claim}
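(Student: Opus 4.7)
The plan is an induction on $i$, exploiting a congruence that ties $M_{i+1}$ to the chosen pair $(m_{i+1},l_{i+1})$ modulo $|k_{i+1}|$. First, multiplying the two equations in (\ref{4.9}) by $l_{i+1}$ and $m_{i+1}$ respectively, subtracting, and using (\ref{4.8}) yields the identity
\[
b_i\, m_{i+1}-a_i\, l_{i+1}=\operatorname{sign}(k_i)\, b_{i-1}\, k_{i+1}.
\]
Reducing modulo $|k_{i+1}|$ gives $b_i\, m_{i+1}\equiv a_i\, l_{i+1}\pmod{|k_{i+1}|}$, and since $(a_i,b_i)=1$ together with $a_i^2-db_i^2=k_{i+1}$ force $\gcd(b_i,k_{i+1})=1$, substituting (\ref{4.52}) produces
\[
l_{i+1}M_{i+1}+m_{i+1}\equiv 0\pmod{|k_{i+1}|},
\]
so $l_{i+1}M_{i+1}+m_{i+1}=n|k_{i+1}|$ for a positive integer $n$ determined uniquely by the requirement $M_{i+1}\in[0,|k_{i+1}|)$. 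Note that no coprimality of $l_{i+1}$ with $k_{i+1}$ is needed here.

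Next, I would invoke the defining minimization property of the second algorithm: $(m_{i+1},l_{i+1})$ minimizes $|m_{i+1}-\sqrt d\,l_{i+1}|$ among admissible pairs. The trivial admissible candidate $l=1$, $m=M_i+r|k_i|$ with the best integer $r$ already yields $|m_{i+1}-\sqrt d\,l_{i+1}|\le|k_i|/2$, and the exact factorization $|k_{i+1}||k_i|=|m_{i+1}-\sqrt d\,l_{i+1}|\cdot|m_{i+1}+\sqrt d\,l_{i+1}|$ from (\ref{4.8}) shows the relative error $|m_{i+1}/l_{i+1}-\sqrt d|$ is of order $|k_{i+1}||k_i|/(2\sqrt d\,l_{i+1}^2)$. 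Substituting $m_{i+1}\approx\sqrt d\,l_{i+1}$ into $l_{i+1}M_{i+1}=n|k_{i+1}|-m_{i+1}$ gives the heuristic $M_{i+1}\approx n|k_{i+1}|/l_{i+1}-\sqrt d$, and the preceding claim $|k_{i+1}|<2\sqrt d$ forces $n|k_{i+1}|/l_{i+1}$ to sit just above $\sqrt d$, so that $M_{i+1}\approx|k_{i+1}|-\sqrt d<\sqrt d$ in the nontrivial regime $\sqrt d<|k_{i+1}|<2\sqrt d$; the complementary regime $|k_{i+1}|\le\sqrt d$ is immediate from $M_{i+1}<|k_{i+1}|$.

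The main obstacle is to promote this heuristic into a rigorous inequality: while dividing an error of size $|k_i|$ in $m_{i+1}$ by $l_{i+1}$ is harmless, lifting from a congruence modulo $|k_{i+1}|$ back to an integer representative can shift $M_{i+1}$ by as much as $|k_{i+1}|/l_{i+1}$, which dominates everything when $l_{i+1}$ is small (in particular, for the earliest iterations where $l_{i+1}=1$ may well be chosen, the argument essentially collapses to the chakravala case and must be treated separately). A rigorous proof should therefore proceed by a \emph{joint} induction with the preceding empirical bound $|k_i|<2\sqrt d$, together with an argument that the minimization criterion systematically selects the smaller of the two candidate square roots of $d$ modulo $|k_{i+1}|$ (namely $M_{i+1}$ rather than $|k_{i+1}|-M_{i+1}$). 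This interplay is likely the reason the author states both claims only as empirical observations.
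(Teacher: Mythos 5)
You should first note that the paper itself offers \emph{no} proof of this statement: it is labelled ``(Based on examples)'' and is left, like the companion bound $|k_i|<2\sqrt d$, as a purely empirical observation. So there is no argument in the paper to compare yours against. Your intermediate identities are correct and in fact reappear in the paper: the relation $b_i m_{i+1}-a_i l_{i+1}=\operatorname{sign}(k_i)\, b_{i-1} k_{i+1}$ is equation (\ref{nuuu}) (with the index shifted), the reduction to $l_{i+1}M_{i+1}+m_{i+1}\equiv 0 \pmod{|k_{i+1}|}$ is (\ref{noo}), and the use of the trivial candidate $l=1$ to bound the minimum by $|k_i|/2$ and the factorization $|k_{i+1}||k_i|=|m_{i+1}-\sqrt d\,l_{i+1}|\cdot|m_{i+1}+\sqrt d\,l_{i+1}|$ are both sound.

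However, as a proof your proposal does not close, and the gap you name at the end is genuine and fatal to the argument as written. The congruence $l_{i+1}M_{i+1}\equiv -m_{i+1}$ only constrains $M_{i+1}$ modulo $|k_{i+1}|/(l_{i+1},k_{i+1})$, and since $M_{i+1}^2\equiv d \pmod{|k_{i+1}|}$ (cf.\ the proof of Proposition \ref{P9}(a)), the real content of the claim is that the algorithm always selects the square root of $d$ modulo $|k_{i+1}|$ whose representative in $[0,|k_{i+1}|)$ lies below $\sqrt d$ rather than the complementary one $|k_{i+1}|-M_{i+1}$; your heuristic $M_{i+1}\approx n|k_{i+1}|/l_{i+1}-\sqrt d$ does not distinguish between these, because the error incurred in lifting the congruence to an integer representative is of order $|k_{i+1}|/l_{i+1}$, which is not small when $l_{i+1}$ is small. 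Your suggestion of a joint induction with $|k_i|<2\sqrt d$ is reasonable but is itself conditional on another unproved empirical claim. In short: your diagnosis of the difficulty is accurate and your identities are consistent with the paper, but neither you nor the author proves the statement; it should be presented as a heuristic plus an identification of the missing step, not as a proof.
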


Since $m_{i+1}-l_{i+1}\sqrt{d}=|k_i| r_i-l_{i+1} (\sqrt{d}-M_i)$,
we are looking for positive integers  $r$ and $l$ such that $ \big||k_i| r-l
(\sqrt{d}-M_i)\big| $ is minimum for $l$ from 1 to $L$ or
equivalently, that minimize $ \big|r- l
\big(\frac{\sqrt{d}-M_i}{|k_i|}\big)\big| $. Observe that for
each $l$, the integer $r$ is uniquely determined, that is $r= $\,
round$\Big(l\,\frac{(\sqrt{d}-M_i)}{|k_i|}\Big)$,  where "round"
is the closest integer.

\

\noindent \textbf{Summary of the implementation of the Second Algorithm with L without involving $\mathbf{a_i}$ and $\mathbf{b_i}$}.  We take  $a_1, b_1, k_2$ as in (\ref{star}). Then, we
define $l_3,m_3$ and $k_3$  using   Step 2 and Step 3,  with $M_2\equiv -\frac{a_1}{b_1}$
(mod $|k_2|$). Then, given $(l_{i},m_{i},k_i)$, we define
$(l_{i+1},m_{i+1},k_{i+1})$ as follows:

\vskip .2cm

\textbf{Step 1.} Compute
$M_i$ as in  Subsection \ref{5-M}.

\vskip .2cm

\textbf{Step 2.}
 Take $l_{i+1}$ as the integer that satisfies the following
minimization
\begin{equation}\label{ver}
\min_{l=1,\dots ,L}\ \Bigg|\,\ l\ \frac{(\sqrt{d}-M_i)}{|k_i|}\ -\
\hbox{round}\bigg(l\ \frac{(\sqrt{d}-M_i)}{|k_i|}\bigg)\ \Bigg|,
\end{equation}
\qquad\qquad \ \ \  and
\begin{equation}
  \label{rrrr}
r_{i}\   = \hbox{round}\bigg(l_{i+1}\ \frac{(\sqrt{d}-M_i)}{|k_i|}\bigg).
\end{equation}

\textbf{Step 3.} Define
\begin{align}
\label{mimi}
m_{i+1} & =M_i\,l_{i+1}\, +\, |k_i|\, r_{i},\\
k_{i+1} & =\frac{m_{i+1}^2-d\,l_{i+1}^2}{k_i}.\nonumber
\end{align}
It is clear, by using (\ref{rrrr}) and
(\ref{mimi}), that $m_{i+1}\simeq {l_{i+1}}
\sqrt{d}$, proving that $m_{i+1}$ is positive. The
method terminates when $k_n=1$ for some $n$.  In all the examples, we obtain the fundamental solution, except for extremely large values of $L$.

\vskip .2cm

In this way, the integers $l_{i+1},m_{i+1}$, and $k_{i+1}$ are defined without using the large
numbers $a_i$ and $b_i$. Still, if we want to include them, we must
take
\begin{equation}\label{mmmm}
    a_i=\frac{a_{i-1}m_{i+1}+d b_{i-1}l_{i+1}}{|k_i|},\qquad b_{\,i}=
    \frac{a_{i-1}l_{i+1}+b_{i-1}m_{i+1}}{|k_i|}.
\end{equation}
Observe that in both algorithms with $L$, (\ref{mmmm}) produces a product representation of the convergent
and the fundamental solution of the Pell equation as follows:

\begin{equation}\label{prod}
a_i+\sqrt{d} \,b_i=\big(a_1+\sqrt{d} \,b_1\big)\
\prod_{j=2}^{i}\ \bigg( \frac{m_{j+1}+\sqrt{d}\, l_{j+1}}{|k_j|}
\bigg)
\end{equation}
and (\ref{prod})
 is similar to what is called {\it "power product"} in
\cite{Len}.


\vskip .5cm

\noindent \textbf{Computation of the minimum in Step 2.} We present in detail some options in
the implementation of the algorithm depending strictly on different ways to  compute the minimum in Step 2.  First, observe that in (\ref{ver}),
 we are basically looking for a
couple of positive integers $l_{i+1}$ and $r_i$ that produce the following
minimization
\begin{equation}\label{verdura}
\min_{\substack{l=1,\dots ,L\\\hbox{\tiny all }r}}\ \big|\, r -\,l\,
\alpha_i \big|,
\end{equation}
where
\begin{equation*}\label{alpha}
\alpha_i=\frac{\sqrt{d} - M_i}{|k_i|}.
\end{equation*}
\vskip .2cm
\noindent Now, we recall
a basic result in the theory of simple continued fractions (see
p.340 in \cite{NZ}):  A pair of positive integers $a$ and $b$ is
called a {\it good approximation} to the positive irrational
number $\xi$ if
\begin{equation*}
|\, b\,\xi \, -\, a\,|\, =\ \min_{\substack{y=1,\dots
,b\\\hbox{\tiny all }x}}\ \big|\, y\, \xi - x \, \big|.
\end{equation*}
A classical result is that the pair $a$ and $b$ is a good approximation
of $\xi$ if and only if $a/b$ is a convergent of $\xi$. Using this result, we present two options for computing Step 2.

\

First, for a fixed $L$,
we can apply the method of continued fractions to $\alpha_i$ in order to find $l_{i+1}$ and $r_i$. If
$\tilde a_n/\tilde b_{\, n}$ is the $n$-th  convergent of $\alpha_i$ with $\tilde b_{\,
n}\leq L< \tilde b_{\, n+1}$, then, we take $l_{i+1}=\tilde b_{\, n}$ and $r_i=\tilde a_n$. These numbers minimize (\ref{verdura}) by
Theorem 7.13 in \cite{NZ}.  This variant of our algorithm is denoted as the {\bf Second Algorithm  with CF and L}.
We shall not present any example of
this implementation.

\

Secondly, we have the option to set (or adjust, if necessary) the number of steps $s$ used in applying the continued fractions algorithm to $\alpha_i$. Then, we set $L_i=l_{i+1}=b_s$ and $r_i=a_s$. Notably, $L$ is not fixed in this scenario. This particular variation of our algorithm is referred to as the {\bf Second Algorithm with CF and s}. Once more, we will refrain from providing an example of this implementation.

\

\begin{remark}
  Observe that the Second Algorithm with CF and L (resp. with CF and $s$), can also be used with different values of L (resp. $s$) as we did with the following LLL version.
\end{remark}

\vskip .5cm

\subsection{ Second Algorithm with LLL}\lbb{lL}

We present one of the most important parts of this work. This improvement in the implementation of the second
algorithm with L consists of replacing Step 2 with an approximation to $\al_i$  using the LLL-algorithm. The LLL-algorithm is the common notation for the \textit{Lenstra–Lenstra–Lovász (LLL) lattice basis reduction algorithm} defined in \cite{LLL}. It is a polynomial time lattice reduction algorithm. We only need to use it for lattices of rank 2, see Section 9 in \cite{Len2} for details. By Proposition 1.39 in \cite{LLL}, we have  the following result: given  rational numbers $\alpha$ and
$\varepsilon$, satisfying $0<\varepsilon<1$, the LLL-algorithm (for rank 2 lattices)
finds  integers $p$ and $q$ for which
\begin{equation*}
    |\, p-q\alpha\, |\, \leq \, \varepsilon \quad \hbox{ and }\quad
1\leq q \leq \frac{\text{\footnotesize $\sqrt{2}$}}{\varepsilon}.
\end{equation*}

\noindent Now, for a fixed positive integer $LLL$, we set
$\varepsilon=\frac{\text{\footnotesize $\sqrt{2}$}}{LLL}$ and choose
$\alpha$ as a   rational approximation of $\alpha_i$.
Therefore, the LLL-algorithm
finds  integers $p$ and $q$ such that
\begin{equation*}
    |\, p-q\alpha\, |\, \leq \, \frac{\text{\footnotesize $\sqrt{2}$}}{LLL} \quad \hbox{ and }\quad
1\leq q \leq LLL.
\end{equation*}
In our examples, we applied the "lattice" command in Maple 8 to the lattice
generated by $(1,0)$ and $(-\,\alpha,
\frac{\varepsilon^2}{\sqrt{2}})$, to produce the positive integers
$l_{i+1}=q$ and $r_i=p$ that we use to replace the Step 2. However, it's important to note that these values may not necessarily minimize  (\ref{verdura}).  For detailed information, refer to pages 139-140 in \cite{Len2}. Consequently, in some cases, we may not obtain the fundamental solution, as we will observe in certain examples.
This variant of our algorithm
is denoted as the {\bf Second Algorithm with LLL}.

\vskip .5cm

The following
table provides an example of the different algorithms applied to
$d=1234567890$, where the cases with $L$ correspond to the Second Algorithm with L. In this case, the fundamental solution $\epsilon$ has 1935
decimal digits (or $R_{\, d}^{^{_{10}}} = 1935$, using our notation for the regulator (with base 10) introduced after Proposition \ref{2.2.2}).
\begin{table}[H]
\caption{Number of steps when $d=1234567890$} \label{table:4} \
\begin{center}
\fontsize{9}{15}\selectfont
\begin{tabular}{ | c || c | c | c | c | c | c | c | c | c |}
\hline
 & CF & Chak. & $L=9$ & $L=100$ & $L=200$
 & $LLL=10^6$  & $LLL=10^{18}$ &$LLL=10^{20}$  &$LLL=10^{25}$ \\
\hline \hline  steps & 3772 & 2611 & 1302 & 768 & 690
& 304 & 105  & 95 & 76  \\
   \hline
\end{tabular}
\end{center}
\end{table}
Observe that in all the cases with $LLL$, the number of digits of the
fundamental solution is related to the value of $LLL$ and the number of steps. Specifically,  the fundamental solution
has $1935$ decimal digits, and with $LLL=10^{20}$ the number of
steps is $95$, hence we have $\frac{1935}{95}\simeq 20$. The same
holds for the other exponents with $LLL$: $\frac{1935}{105}\simeq
18$.
Again, the study of the convergence of the different algorithms
remains as an open problem, as well as the running time of them.

\vskip .5cm

In the following example, we take
$d=130940879$. In this case, the continued fraction algorithm needs
5259 steps, and the fundamental solution has 2727 decimal digits. Observe that $\lfloor\sqrt{R_{\, d}^{^{_{10}}}}\rfloor=52$. In this case, with the Second Algorithm with $LLL=10^{52}$, we needed 52 steps to obtain the fundamental solution.

Now, in the same example, \textbf{we use the Second Algorithm with LLL, but we apply it with
2 different speeds}, that is, we use a value of $LLL$ for a certain
fixed number of steps, and then we continue with another value of
$LLL$ for the remaining steps. More precisely,

\

\noindent $\bullet$  \ 27 steps with $LLL=10^{75}$ + more than 500 steps
with $LLL=10^{10}$: diverges.

\noindent $\bullet$ \  27 steps with $LLL=10^{75}$ + 128 steps with
$LLL=10^{5}$: 155 steps, we get solution $\epsilon$.

\noindent $\bullet$ \  35 steps with $LLL=10^{75}$ + more than 500 steps
with $LLL=10^{10}$: diverges.

\noindent $\bullet$ \  35 steps with $LLL=10^{75}$ + 523 steps with
$LLL=10^{5}$: 558 steps,  we get solution $\epsilon^2$.

\noindent $\bullet$ \  27 steps with $LLL=10^{100}$ + \ 3 \  steps with
$LLL=10^{5}$: 30 steps,  we get solution $\epsilon$.

\noindent $\bullet$ \  18 steps with $LLL=10^{150}$ +  510   steps with
$LLL=10^{5}$: 528 steps,  we get solution $\epsilon^2$.

\noindent $\bullet$ \  18 steps with $LLL=10^{150}$ + 3 steps with
$LLL=10^{6}$: 21 steps,  we get solution $\epsilon$.

\noindent $\bullet$ \  9 steps with $LLL=10^{250}$ + more than 500 steps
with $LLL=10^{10}$: diverges.

\noindent $\bullet$ \  9 steps with $LLL=10^{250}$ + 87 steps with
$LLL=10^{5}$: 96 steps,  we get solution $\epsilon$.

\noindent $\bullet$ \  11 steps with $LLL=10^{250}$ + 501 steps with
$LLL=10^{5}$: 512 steps,  we get solution $\epsilon^2$.

\noindent $\bullet$ \  9 steps with $LLL=10^{300}$ + \  4 steps with
$LLL=10^{5}$: 13 steps,  we get solution $\epsilon$.

\noindent $\bullet$ \  9 steps with $LLL=10^{300}$ + \  2 steps with
$LLL=10^{10}$: 11 steps,  we get solution $\epsilon$.

\noindent $\bullet$ \  9 steps with $LLL=10^{300}$ + \  1 step with
$LLL=10^{20}$: 10 steps,  we get solution $\epsilon$.

\noindent $\bullet$ \  6 steps with $LLL=10^{452}$ + 1 step with
$LLL=10^{10}$: 7 steps,  we get solution $\epsilon$.

\noindent $\bullet$  \  5 steps with $LLL=10^{542}$ + more than 500 steps
with $LLL=10^{10}$: diverges.

\noindent $\bullet$  \  5  steps with  $LLL=10^{542}$ + \ 3 steps with
$LLL=10^{5}$: {8} steps,  we get solution $\epsilon$.

\vskip .4cm

\noi Observe that in some cases the algorithm diverges, and in some other cases, we get the square of the fundamental solution. In most cases, we took the number of steps for the first value of $LLL$  to approximate to the floor of the regulator $R_{\, d}^{^{_{10}}} $, which is equal
to 2727. For example, in the last item in the previous list, we took 5 steps with $LLL=10^{542}$ since $5 \cdot 542=2710$, and if we add 3 steps with $LLL=10^{5}$, we have $5 \cdot 542 + 3 \cdot 5=2725\simeq 2727$.

\vskip .3cm

If we know a close lower bound of the
regulator $R_{\, d}^{^{_{10}}} $, then we can take a few steps with a big $LLL$ to
approximate it and then continue with a lower value of $LLL$ to
get the fundamental solution. The
approximation of the regulator that we need is not necessarily the
floor of the regulator, as in Section 6 in \cite{Len}. It is enough
to know a lower bound that should follow from the analysis of
convergence and the running time. What we essentially have done was the following: if $R_{\, d}^{^{_{10}}} \simeq k_d\, h_d +c_d$, where $k_d , h_d , c_d$ are positive integers, then we apply $k_d$ steps with $LLL=10^{h_d}$ and $c_d$ steps $LLL=10$, to obtain the fundamental solution.
Continuing with the same example,
and considering that $272\cdot 10=2720\simeq 2727$, we have the
following  case: if we take 10 steps with $LLL=10^{272}$,
 then the algorithm converges to $\epsilon$.

\vskip .3cm

If we accept a weak version of the folklore conjecture given in p.7 in \cite{Len}, then we may assume  the first inequality in
\begin{equation*}
  \frac{\sqrt{d}}{(\log d)^q} < R_{\, d}^{^{_{10}}}\cdot \log (10) < \sqrt{d} \, (\log (4d)+2),
\end{equation*}
for some positive integer $q$. In the examples presented on pages 345-348 in \cite{JW}, the regulators satisfy this inequality with $q=1$. Therefore, we may apply the previous idea to $\frac{\sqrt{d}}{\log (10)\cdot (\log d)^q}$, a value that is close to $R_{\, d}^{^{_{10}}}$, and take positive integers $k_d$ and $h_d$ such that  $\frac{\sqrt{d}}{\log (10)\cdot (\log d)^q}\simeq k_d\, h_d$.

The most important part of the running time is given by the implementation of the LLL-algorithm at each step. If it were necessary, for each $\alpha_i$,  we can take the appropriate $LLL_i$ to reduce the running time,  
by  using the remarks on pages 140 and 147 in \cite{Len2} about the implementation of the LLL-algorithm in the simple and special case of  lattices of rank 2.

\subsection{Computation of $M_i$ without involving $a_{i-1}$ and $b_{i-1}$}\label{5-M}

The results of this section hold for the First and Second Algorithms with L. Recall that we defined $M_i$ such that $0\leq M_i<|k_i|$ and
\begin{equation*}
    M_{i}\equiv -\frac{\ a_{i-1}}{\ b_{i-1}}  \ \ {\hbox{ (mod
    $|k_i|$)}}.
\end{equation*}

To obtain $M_i$ in a different way, we need some results.
 Using
the idea in (\ref{909}), together with (\ref{4.8}) and
(\ref{4.9}),  we have
\begin{equation}\label{nuuu}
b_{i-1}\,m_i
-\,a_{i-1} l_i= \hbox{sign}(k_{i-1})\, b_{i-2}\, k_i.
\end{equation}
Similarly, one can see that
\begin{equation}\label{nuuu2}
  a_{i-1} m_i - d\, b_{i-1} l_i=\hbox{sign}(k_{i-1})\, a_{i-2}\, k_i.
\end{equation}
Hence
\begin{equation}\label{m1}
m_i\, b_{i-1}\equiv l_i\, a_{i-1} \ \ (\hbox{mod }|k_i|),
\end{equation}
or
\begin{equation}\label{noo}
  m_i\equiv -M_i \, l_i \ \ \, (\hbox{mod }|k_i|).
\end{equation}
Observe that, from (\ref{4.51}) and (\ref{m1}), we obtain
\begin{equation}\label{mm2}
    l_i m_{i+1} \equiv -l_{i+1} m_i \quad\hbox{ (mod $|k_i|$) },
\end{equation}
and we recover (\ref{3.12}) by taking $1=L=l_i$ for all $i$ in
(\ref{mm2}), that is the {\it main equation} in the implementation of
the Chakravala algorithm, cf. (\ref{3.12}).

In certain steps within the examples, there might occur situations where $l_i$ and $k_i$ are not coprime. Hence  sometimes we cannot take  $
M_i\equiv -\frac{m_{i}}{l_{i}}\, \ \hbox{ (mod $|k_i|$) }$ in (\ref{noo}).

Recall from  (\ref{4.53}), that we have
\begin{equation} \label{nuevo}
    m_{i+1}=M_i \,l_{i+1} + r_i \ |k_i|,
\end{equation}

\vskip .1cm

\noindent for some integer $r_i$. Hence, by  definition we have that $r_{i-1}=\frac{m_i -M_{i-1} l_i }{|k_{i-1}|}$, and now we define
\begin{equation*}
  s_{i-1}:=\frac{d \, l_i -M_{i-1} m_i }{|k_{i-1}|}.
\end{equation*}

\vskip .1cm

\begin{proposition} \label{P9} The following properties hold:

\vskip .1cm

   (a) $s_{i-1}$ is an integer.

\vskip .1cm

 (b) $a_{i-1} r_{i-1} \equiv b_{i-1} s_{i-1} \, \ \hbox{ (mod $|k_i|$) }$.

\vskip .1cm

  (c) $(l_i, r_{i-1})=1$.

 \vskip .1cm

  (d) $M_i\equiv M_{i-1}  \, \ (\hbox{mod $(l_i, k_i)$})$.
\end{proposition}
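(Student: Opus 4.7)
The plan is to prove the four assertions (a)--(d) in sequence. The common ingredients are: the defining relation (\ref{4.53}) $m_i = M_{i-1}\,l_i + r_{i-1}|k_{i-1}|$; the identities (\ref{nuuu}) and (\ref{nuuu2}); the recursion (\ref{4.9}) written with $i$ shifted to $i-1$; and the standard coprimality $\gcd(b_j,k_{j+1})=1$, which follows from $a_j^2-d\,b_j^2=k_{j+1}$ together with $\gcd(a_j,b_j)=1$.

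For (a), I would substitute $m_i=M_{i-1}l_i+r_{i-1}|k_{i-1}|$ into the numerator of $s_{i-1}$ to rewrite
\[
-M_{i-1}m_i+d\,l_i=(d-M_{i-1}^2)\,l_i-M_{i-1}r_{i-1}|k_{i-1}|,
\]
reducing the claim to $|k_{i-1}|\mid(d-M_{i-1}^2)$. Squaring $b_{i-2}M_{i-1}\equiv-a_{i-2}\pmod{|k_{i-1}|}$ and combining with $a_{i-2}^2\equiv d\,b_{i-2}^2\pmod{|k_{i-1}|}$ (i.e.\ $a_{i-2}^2-d\,b_{i-2}^2=k_{i-1}$) gives $b_{i-2}^2(d-M_{i-1}^2)\equiv 0\pmod{|k_{i-1}|}$, and cancelling $b_{i-2}^2$ via $\gcd(b_{i-2},k_{i-1})=1$ finishes the argument. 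For (b), I would expand
\[
|k_{i-1}|(a_{i-1}r_{i-1}-b_{i-1}s_{i-1})=-M_{i-1}(a_{i-1}l_i-b_{i-1}m_i)+(a_{i-1}m_i-d\,b_{i-1}l_i)
\]
and apply (\ref{nuuu}) and (\ref{nuuu2}) to the two parentheses, obtaining $\mathrm{sign}(k_{i-1})\,k_i\,(M_{i-1}b_{i-2}+a_{i-2})$. Since the last factor is a multiple of $|k_{i-1}|$, dividing by $|k_{i-1}|$ yields $a_{i-1}r_{i-1}-b_{i-1}s_{i-1}\equiv 0\pmod{|k_i|}$.

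Part (c) is the crux and would use the minimisation criterion of the second algorithm (choose $(l_i,m_i)$ to minimise $|m-\sqrt{d}\,l|$ over $1\leq l\leq L$ subject to $|k_{i-1}|\mid a_{i-2}l+b_{i-2}m$), in the spirit of Proposition \ref{p.1}. The key observation is that, by the very definition of $M_{i-1}$, for every integer $r$ the pair $(l,\,M_{i-1}l+r|k_{i-1}|)$ automatically satisfies the divisibility constraint: plugging in gives $l(a_{i-2}+b_{i-2}M_{i-1})+b_{i-2}r|k_{i-1}|$, both of whose terms are multiples of $|k_{i-1}|$. Hence if $g:=\gcd(l_i,r_{i-1})>1$, the scaled pair $(l_i/g,\,r_{i-1}/g)$ remains admissible, lies in the range $1\leq l_i/g<l_i\leq L$, and gives $|m'_i-\sqrt{d}\,l'_i|=|m_i-\sqrt{d}\,l_i|/g$, strictly smaller (since $\sqrt{d}$ is irrational), contradicting minimality.

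Finally, for (d) let $g=(l_i,k_i)$. Combining (\ref{noo}) with (\ref{4.53}) yields $l_i(M_i+M_{i-1})+r_{i-1}|k_{i-1}|\equiv 0\pmod{|k_i|}$; reducing modulo $g$ gives $g\mid r_{i-1}|k_{i-1}|$, and part (c) forces $\gcd(g,r_{i-1})=1$, hence $g\mid|k_{i-1}|$. A direct computation, using (\ref{4.9}) with the shifted index to expand $a_{i-1}$ and $b_{i-1}$ in terms of $a_{i-2},b_{i-2},m_i,l_i$, yields
\[
a_{i-1}b_{i-2}-b_{i-1}a_{i-2}=-\mathrm{sign}(k_{i-1})\,l_i\equiv 0\pmod g.
\]
Since $\gcd(b_{i-2},k_{i-1})=\gcd(b_{i-1},k_i)=1$ and $g$ divides both $k_{i-1}$ and $k_i$, the elements $b_{i-2}$ and $b_{i-1}$ are invertible modulo $g$, so one may divide to conclude $M_i\equiv -a_{i-1}/b_{i-1}\equiv -a_{i-2}/b_{i-2}\equiv M_{i-1}\pmod g$. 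The main obstacle is the admissibility check in (c): one must notice that the divisibility constraint of the algorithm is equivalent to writing $m=M_{i-1}l+r|k_{i-1}|$, so that the minimisation is effectively over free pairs $(l,r)$ with $1\leq l\leq L$; once this is seen, the rescaling argument is routine.
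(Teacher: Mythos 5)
Your proposal is correct and follows essentially the same route as the paper: (a) reduces to $M_{i-1}^2\equiv d \pmod{|k_{i-1}|}$, (b) is the same expansion via (\ref{nuuu}) and (\ref{nuuu2}), (c) is the same rescaling-versus-minimality contradiction, and (d) uses $(l_i,k_i)\mid k_{i-1}$ together with (\ref{000}). The only difference is that you spell out the divisibility $(l_i,k_i)\mid k_{i-1}$ (via (\ref{noo}), (\ref{4.53}) and part (c)), which the paper dismisses as ``easy to see.''
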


\begin{proof}
    (a) By (\ref{nuevo}), we have that $M_{i-1} m_i\equiv M_{i-1}^2 l_i \ \hbox{ (mod $|k_{i-1}|$) }$. Using that $a_{i-2}^2-d\, b_{i-2}^2=k_{i-1}$, we obtain that $M_{i-1}^2\equiv \left(\frac{a_{i-2}}{b_{i-1}}\right)^2\equiv d \hbox{ (mod $|k_{i-1}|$)}$. Therefore, $M_{i-1} m_i\equiv d\, l_i \hbox{ (mod $|k_{i-1}|$)}$,  finishing the proof of (a).

    (b) Using (\ref{nuuu}) and (\ref{nuuu2}), observe that
   \begin{align*}
     a_{i-1} r_{i-1} - b_{i-1} s_{i-1} & = a_{i-1} \left( \frac{m_i -M_{i-1} l_i}{|k_{i-1}|}\right) -  b_{i-1} \left(\frac{  d \, l_i -M_{i-1} m_i}{|k_{i-1}|}\right) \\
       & =\frac{1}{|k_{i-1}|}\Big(M_{i-1} \big( b_{i-1} m_i - a_{i-1} l_i \big) + \big(a_{i-1} m_i - d\, b_{i-1} l_i\big) \Big)\\
       & = \frac{k_i}{k_{i-1}}\big(M_{i-1} b_{i-2} + a_{i-2}\big),
   \end{align*}
   but $(M_{i-1} b_{i-2} + a_{i-2}\big)$ is a multiple of $k_{i-1}$ by the definition of $M_{i-1}$, finishing the proof of (b).

   (c) Suppose that $n_i=(l_i,r_{i-1})>1$. Since $m_i=M_{i-1} l_i + r_{i-1} |k_{i-1}|$, then $n_i$ divides $m_i$. Define $\widetilde m_i=m_i/n_i$, $\widetilde l_i=l_i/n_i$ and $\tilde r_{i-1}=r_{i-1}/n_i$. Then $\widetilde m_i$ and $\widetilde l_i$ satisfy  $\widetilde m_i=M_{i-1} \widetilde l_i + \tilde r_{i-1} |k_{i-1}|$, i.e., $k_{i-1} $ divides $a_{i-2}\, \widetilde l_i + b_{i-2}\, \widetilde m_i$, and
   \begin{equation*}
     |\,m_i - \sqrt d \, l_i |=n_i \, |\,\widetilde m_i - \sqrt d \,\widetilde l_i |,
   \end{equation*}
   which  contradicts the minimality of $|\, m_i - \sqrt d \, l_i |$ in the Second Algorithm with L. Similarly, a contradiction is obtained for the First Algorithm with L, concluding the proof of (c).

   (d) Finally, observe that $(l_i,k_i)|\,m_i$ by (\ref{noo}). Hence, using that $m_i=M_{i-1} l_i+r_{i-1} |k_{i-1}|$, we obtain that $(l_i,k_i)|(r_{i-1} |k_{i-1}|)$. Therefore, using part (c), we have that $(l_i,k_i)|k_{i-1}$ and $(l_i,k_i)=(l_i,k_i,k_{i-1})$.  Now, by (\ref{000}), we have  $a_{i-1}b_{i-2}\equiv b_{i-1} a_{i-2} \ (\hbox{mod } l_i)$. In particular, $a_{i-1}b_{i-2}\equiv b_{i-1} a_{i-2}\  (\hbox{mod } (l_i,k_i,k_{i-1}))$. Hence we can take the quotients to get
     \begin{equation*}
       \frac{a_{i-1}}{b_{i-1}}\equiv \frac{a_{i-2}}{b_{i-2}}\  (\hbox{mod } (l_i,k_i)).
     \end{equation*}
From this, (d) is obtained.
\end{proof}

\

\noindent
\textbf{Summary of the computation of $M_i$:}

\vskip .2cm

\noindent $\ast$ \, If $(l_i,k_i)=1$, then by (\ref{noo}), we take
\begin{equation*}
  M_i\equiv - \frac{m_i}{l_i} \ \, (\hbox{mod }|k_i|).
\end{equation*}
Suppose that  $(l_i,k_i)>1$, then
\begin{equation}\label{11111}
  M_i \equiv - \frac{\widetilde m_i}{{\widetilde l_i}} \ \, (\hbox{mod }|\widetilde k_i|),
\end{equation}
where $\widetilde m_i= \frac{m_i}{(l_i,k_i)}, \widetilde l_i = \frac{l_i}{(l_i,k_i)}$ and $\widetilde k_i= \frac{k_i}{(l_i,k_i)}$. Observe that $\widetilde m_i$ is an integer since $(l_i,k_i)|\,m_i$ by (\ref{noo}).

\vskip .2cm

\noindent $\ast$ \, If $(l_i,k_i)>1$ and $\left( (l_i,k_i), \frac{k_i}{(l_i,k_i)} \right)=1$, then using (\ref{11111}) together with
$M_i\equiv M_{i-1}  \, \ (\hbox{mod $(l_i, k_i)$})$, and the Chinese remainder theorem, we can get $M_i$.

\vskip .2cm

\noindent From now on, we suppose that  $(l_i,k_i)>1$ and $\left( (l_i,k_i), \frac{k_i}{(l_i,k_i)} \right)>1$

\vskip .2cm

\noindent $\ast$ \, If $(r_{i-1},k_i)=1$, then using Proposition \ref{P9} (b), we take
\begin{equation*}
  M_i\equiv - \frac{s_{i-1}}{r_{i-1}} \ \, (\hbox{mod }|k_i|).
\end{equation*}

\vskip .2cm

\noindent $\ast$ \, If $(r_{i-1},k_i)>1$, then we have
\begin{equation}\label{22222}
  M_i\equiv - \frac{\widehat s_{i-1}}{\widehat r_{i-1}} \ \, (\hbox{mod }|\widehat k_i|),
\end{equation}
where $\widehat r_{i-1}= \frac{r_{i-1}}{(r_{i-1},k_i)},   \widehat s_{i-1}= \frac{s_{i-1}}{(r_{i-1},k_i)}$ and $\widehat k_{i}= \frac{k_{i}}{(r_{i-1},k_i)}$. Using Proposition \ref{P9}, that is $(l_i,r_{i-1})=1$, we have that $k_i$ divides $\widehat k_{i} \cdot \widetilde k_i$. Then we can take $\widehat K_{i}$ (resp. $\widetilde K_i$) a divisor of $\widehat k_{i}$ (resp. $\widetilde k_i$) such that
$(\widehat K_{i},\widetilde K_i)=1$ and $k_i=\widehat K_{i} \cdot \widetilde K_i$. Finally, using these divisors in
 (\ref{11111}) and (\ref{22222}), we can apply  the Chinese remainder theorem to obtain the value of $M_i$.

\vskip .4cm

This concludes the computation of $M_i$ without using $a_{i-1}$ and $b_{i-1}$. There might be a simpler approach to achieve this. Additionally, it remains uncertain whether an implementation exists where one can ensure, at each step, that $k_i$ divides $a_{i-1} l_{i+1} + b_{i-1} m_{i+1}$ without relying on the values of
$M_i$.

\vskip .2cm

\section{Generalization of the Continued Fraction Algorithm with an Additional Variable}\lbb{7ma}

In this section, we introduce a generalization of the continued fraction algorithm, drawing inspiration from the ideas of the First Algorithm with L. Note that in the continued fraction algorithm, $q_n$ in
(\ref{vvver}) corresponds to selecting $P_{n+1}$  as the positive integer
congruent to $-P_n$  module $Q_n$ (where all $Q_n$ are positive) satisfying
\begin{equation*}
\sqrt{d} -P_{n+1} \, >0\ \hbox{  and\  minimizing this value},
\end{equation*}
or equivalently
\begin{equation*}
{d} -P_{n+1}^2 \, >0\ \hbox{  and\  minimizing this value}.
\end{equation*}

\vskip .2cm

\noindent With this motivation, we can attempt to define a kind of continued fraction algorithm with L by replacing the minimization conditions in the First and Second Algorithms with L with:
\begin{equation}\label{3CF}
l_{i+1} \sqrt{d} -m_{i+1}\, >0\ \hbox{  and\  minimizing this value},
\end{equation}
or
\begin{equation}\label{4CF}
 l_{i+1}^{\,2}\, {d} -m_{i+1}^2\, >0\ \hbox{  and\  minimizing this value}.
\end{equation}

\vskip .3cm

More precisely, we will use (\ref{4CF}) to define the \textbf{Continued Fraction Algorithm  with L} as follows: given a  non-square positive integer $d$ and a
positive integer $L$, the algorithm produces the sequences of
integers $a_i, b_i, k_i, m_i, l_i$ by the following recipe:
we start with  the first step in Chakravala algorithm as in Remark \ref{inicio}:
\begin{align*}\label{inicio9}
& \mathrm{if \ \ }(\lfloor\sqrt{d}\rfloor+1)^2-d<d-\lfloor\sqrt{d}\rfloor^2
,\mathrm{\ \ then\ \ }  a_1=\lfloor\sqrt{d}\rfloor+1,\,
b_1=1,\,k_2=a_1^2-d\, b_1^2, \\
& \mathrm{else:\ \  } a_1=\lfloor\sqrt{d}\rfloor, \, b_1=1,\,k_2=a_1^2-d\, b_1^2.\nonumber
\end{align*}
Now, given integers $a_{i-1},b_{i-1}$ and $k_i$ where
$(a_{i-1},b_{i-1})=1$ such that
\begin{equation*}
    a_{i-1}^2-d \, b_{i-1}^2=k_i,
\end{equation*}
we choose $m_{i+1}$ and $l_{i+1}$ to be a positive integers for
which $a_{i-1}l_{i+1}+b_{i-1}m_{i+1}$ is divisible by $k_i$ and
\begin{equation}\label{9CF}
 l_{i+1}^{\,2} \, {d} -m_{i+1}^2\, >0\ \hbox{  and\  minimizing this value for
$1\leq l_{i+1}\leq L$}.
\end{equation}

\vskip .2cm

\noindent
Then we take
\begin{equation*}
k_{i+1}=\frac{m_{i+1}^2-d\,l_{i+1}^2}{k_i}
\end{equation*}
and
\begin{equation*}
    a_i=\frac{a_{i-1}m_{i+1}+d \, b_{i-1}l_{i+1}}{|k_i|},\qquad b_i=
    \frac{a_{i-1}l_{i+1}+b_{i-1}m_{i+1}}{|k_i|}.
\end{equation*}
As before, using (\ref{4.33}), (\ref{4.4}) and $(a_{i-1},b_{i-1})=1$, we obtain that $k_{i+1}$ and $a_i$
are integers. By  a direct computation, we have
\begin{equation*}
    a_i^2-d \, b_i^2=k_{i+1},
\end{equation*}
and  to complete the recursive definition, we need to
prove that $(a_i,b_i)=1$. This is immediate by the ideas in the proof of  Proposition {\ref{p.1}}. The method
terminates when $k_n=1$ for some $n$. \textit{In the examples, we always obtained the fundamental solution,} except for extremely large values of $L$ relative to $d$. It needs more steps than the First Algorithm with L. As before, we identify the study of convergence and  its computational complexity  as open problems.

Finally, if we replace (\ref{9CF}) by (\ref{3CF}) in the previous algorithm, the algorithm does not work.

\vskip .3cm

\section{Some   Formulas and Final Remarks}\lbb{first}

\

In this section, we present some formulas analogous to well-known formulas for continued fractions written in Section 2. These formulas apply to the Second Algorithm with L. By definition and a simple computation, we have
 \begin{align*}
  a_i^2 - d \, b_i^2 & =k_{i+1} \nonumber \\
  a_i \, a_{i-1} - d\, b_i\,  b_{i-1} & = \mathrm{sign}(k_i) \ m_{i+1}
 \end{align*}
 which are the analogs to equations (3.13) in \cite{JW}. By the proof of Proposition \ref{p.1}, we have
  \begin{equation}\label{000}
    a_i\, b_{i-1} - b_i \,a_{i-1} = - \mathrm{sign}(k_i) \ l_{i+1}
  \end{equation}
that corresponds to (\ref{4}). The following formulas were proved in examples. These are the version with $L$ of (\ref{3}):
\begin{align*}
  l_{i+1} a_{i+1}  & = q_{i+1} a_i - \mathrm{sign}(k_i)\  \mathrm{sign}(k_{i+1})\ l_{i+2}\, a_{i-1} \\
  l_{i+1} b_{i+1}  & = q_{i+1} b_i - \mathrm{sign}(k_i)\ \mathrm{sign}(k_{i+1})\  l_{i+2}\,  b_{i-1},
\end{align*}

\noindent where $q_i$ is defined by $l_i \, m_{i+1} + m_i\,  l_{i+1} = q_i \, |k_i|$, which is a positive integer by (\ref{mm2}). These are the version with $L$ of (\ref{19}):
\begin{align*}
  l_{i+1}\, d\, b_i & = m_{i+1}\, a_i - \mathrm{sign}(k_i)\ k_{i+1}\, a_{i-1}  \\
  l_{i+1}\, a_i & = m_{i+1}\, b_i - \mathrm{sign}(k_i)\ k_{i+1}\, b_{i-1}.
\end{align*}

Now, we present a formula analogous to (\ref{PPHI}), that is, a kind of  interpretation of this algorithm in terms
of a generalized continued fraction expansion of $\sqrt{d}$. We define (cf. (\ref{7000}))
\begin{equation*}
   \phi_i=\frac{(m_i + l_i \, \sqrt{d})}{|k_i|}\, l_{i+1}.
\end{equation*}
By the standard computation, one can prove that
\begin{equation*}
   \phi_{i+1}=\frac{s\, l_i\,l_{i+2}}{\phi_i -q_i}
\end{equation*}
where $q_i$ was defined above, and $s=-\mathrm{sign}(k_i)\ \mathrm{sign}(k_{i+1})$.

\vskip .8cm

\subsection*{Acknowledgements} Dedicated to my parents, H\'ector and Sheila, with heartfelt gratitude.


\bibliographystyle{amsalpha}


\end{document}